\documentclass{article} 
\usepackage{iclr2026_conference,times}


\usepackage{amsmath,amsfonts,bm,amsthm,amssymb}









\def\eqref#1{equation~\ref{#1}}









\def\1{\bm{1}}

\def\eps{{\epsilon}}







\def\vzero{{\bm{0}}}
\def\vone{{\bm{1}}}

\def\vu{{\bm{u}}}

\def\vw{{\bm{w}}}
\def\vx{{\bm{x}}}
\def\vy{{\bm{y}}}
\def\vz{{\bm{z}}}


\def\mA{{\bm{A}}}

\def\mH{{\bm{H}}}

\def\mP{{\bm{P}}}

\def\mU{{\bm{U}}}

\def\mW{{\bm{W}}}

\DeclareMathAlphabet{\mathsfit}{\encodingdefault}{\sfdefault}{m}{sl}
\SetMathAlphabet{\mathsfit}{bold}{\encodingdefault}{\sfdefault}{bx}{n}

\def\gA{{\mathcal{A}}}
\def\gB{{\mathcal{B}}}

\def\gF{{\mathcal{F}}}

\def\gO{{\mathcal{O}}}
\def\gP{{\mathcal{P}}}

\def\gR{{\mathcal{R}}}

\def\gT{{\mathcal{T}}}

\def\gX{{\mathcal{X}}}
\def\gY{{\mathcal{Y}}}
\def\gZ{{\mathcal{Z}}}



\def\sN{{\mathbb{N}}}
\def\sO{{\mathbb{O}}}

\def\sR{{\mathbb{R}}}








\newcommand{\E}{\mathbb{E}}



\newtheorem{thm}{Theorem}[section]
\newtheorem{dfn}{Definition}[section]
\newtheorem{exmp}{Example}[section]
\newtheorem{lem}{Lemma}[section]
\newtheorem{asm}{Assumption}[section]
\newtheorem{rmk}{Remark}[section]

\usepackage[table]{xcolor}
\usepackage{hyperref}
\usepackage{url}
\usepackage{thmtools}
\usepackage{thm-restate}
\usepackage{graphicx}
\usepackage{booktabs} 
\usepackage{multirow}
\usepackage{algorithm,algorithmic}
\usepackage{xcolor}
\usepackage{nicefrac}

\definecolor{darkgreen}{RGB}{34, 139, 34} 
\definecolor{darkblue}{RGB}{30, 60, 150}  
\title{Faster Gradient Methods for  Highly-smooth Stochastic Bilevel Optimization}



\author{Lesi Chen and Junru Li\\
Tsinghua University \& Shanghai Qizhi Institude, China \\
\texttt{\{chenlc23,jr-li24\}@mails.tsinghua.edu.cn} \\
\AND
El Mahdi Chayti \\
École Polytechnique Fédérale de Lausanne (EPFL), Switzerland \\
\texttt{el-mahdi.chayti@epfl.ch} \\
\AND
Jingzhao Zhang $^\dagger$  \\
Tsinghua University \& Shanghai Qizhi Institude,China \\
\texttt{jingzhaoz@mail.tsinghua.edu.cn}
}

%

\iclrfinalcopy 
\begin{document}

\maketitle

\begin{abstract}
This paper studies the complexity of finding an $\epsilon$-stationary point for stochastic bilevel optimization when the upper-level problem is nonconvex and the lower-level problem is strongly convex.
Recent work proposed the first-order method, F${}^2$SA, achieving the $\tilde \gO(\epsilon^{-6})$ upper complexity bound for first-order smooth problems. This is slower than the optimal $\Omega(\epsilon^{-4})$ complexity lower bound in its single-level counterpart. 
In this work, we show that faster rates are achievable for higher-order smooth problems. We first reformulate F$^2$SA as approximating the hyper-gradient with a forward difference. Based on this observation, we propose a class of methods F${}^2$SA-$p$ that uses $p$th-order finite difference for hyper-gradient approximation and improves the upper bound to $\tilde \gO(p \epsilon^{-4-2/p})$ for $p$th-order smooth problems. Finally, we demonstrate that the
$\Omega(\epsilon^{-4})$ lower bound
also holds for stochastic bilevel problems when the high-order smoothness holds for the lower-level variable, indicating that the upper bound of F${}^2$SA-$p$ is nearly optimal in the region $p = \Omega(\log \epsilon^{-1} / \log \log \eps^{-1})$. 
\end{abstract}
{
\renewcommand\thefootnote{$^\dagger$}
\footnotetext{The corresponding author.}}
\section{Introduction}

Many machine learning problems, such as meta-learning \citep{rajeswaran2019meta}, hyper-parameter tuning \citep{bao2021stability,franceschi2018bilevel,mackay2019self}, adversarial training \citep{goodfellow2020generative}, and reinforcement learning \citep{yang2019provably,hong2023two,zeng2024two,zeng2024fast} can be abstracted as solving the following bilevel optimization problem:
\begin{align} \label{prob:main-BLO}
    \min_{\vx \in \sR^{d_x}} \varphi(\vx) = f(\vx,\vy^*(\vx)) , \quad \vy^*(\vx) = \arg \min_{\vy \in \sR^{d_y}} g(\vx,\vy),
\end{align} We call $f$ and $g$ the upper-level and lower-level functions, respectively, and call $\varphi$ the \textit{hyper-objective}. In this paper, we consider the most common \textit{nonconvex-strongly-convex} setting where $f:\sR^{d_x} \rightarrow \sR $ is smooth and possibly nonconvex, and $g: \sR^{d_y} \rightarrow \sR$ is smooth jointly in $(\vx,\vy)$ and strongly convex in $\vy$. Under the lower-level strong convexity assumption, the implicit function theorem \citep{dontchev2009implicit} indicates the following closed form of the hyper-gradient
\begin{align} \label{eq:hyper-grad}
    \nabla \varphi(\vx) = \nabla_x f(\vx,\vy^*(\vx)) - \nabla_{xy}^2 g(\vx,\vy^*(\vx)) [ \nabla_{yy}^2 g(\vx,\vy^*(\vx))]^{-1} \nabla_y f(\vx,\vy^*(\vx)).
\end{align} 
Following the works in nonconvex optimization \citep{carmon2020lower,carmon2021lower,arjevani2023lower}, we consider the task of finding an $\epsilon$-stationary point of $\varphi$, \textit{i.e.}, a point $\vx \in \sR^{d_x}$ such that $\Vert \nabla \varphi(\vx) \Vert \le~\epsilon$.
Motivated by many real machine learning tasks, we study the stochastic setting, where the algorithms only have access to the stochastic derivative estimators of both $f$ and $g$.

The first efficient algorithm BSA \citet{ghadimi2018approximation} for solving the stochastic bilevel problem leverages both stochastic gradient and Hessian-vector-product (HVP) oracles to find an $\epsilon$-stationary point of $\varphi(\vx)$.
Subsequently, \citet{ji2021bilevel} proposed stocBiO by incorporating multiple enhanced designs in BSA to improve the complexity. 
\citet{yang2023achieving} proposes FdeHBO that uses finite-differences to estimate HVP vectors. However, 
all these methods require the stochastic Hessian assumption~(\ref{eq:asm-stocBio}) on the lower-level function, which is stronger than the standard SGD assumption.


\citet{kwon2023fully} proposed the first fully first-order method F${}^2$SA that works under standard SGD assumptions on both $f$ and $g$ (Assumption \ref{asm:stoc}).
The main idea is to solve the following penalty problem \citep{liu2022bome,liu2023value,shen2023penalty,shen2025penalty,lu2024first,lu2026solving}:
\begin{align} \label{eq:penalty-Kwon}
    \min_{\vx \in \sR^{d_x}, \vy \in \sR^{d_y}} f(\vx,\vy) + \lambda \left(g(\vx,\vy) - \min_{\vz \in \sR^{d_y}} g(\vx,\vz) \right),
\end{align}
where $\lambda$ is taken to be sufficiently large such that $\lambda = \Omega(\epsilon^{-1})$. {If we 
interpret $\lambda$ as the Lagrangian multiplier, then Problem (\ref{eq:penalty-Kwon}) can be viewed as
the Lagrangian function of the constrained optimization $\min_{\vx \in \sR^{d_x}, \vy \in \sR^{d_y}} f(\vx,\vy), {\rm s.t.} ~ g(\vx,\vy) \le g(\vx,\vy^*(\vx))$.}
Thanks to Danskin's theorem, the gradient of the Problem (\ref{eq:penalty-Kwon}) only involves gradient information.
Therefore, F${}^2$SA does not require the stochastic Hessian assumptions (\ref{eq:asm-stocBio}). More importantly, by directly leveraging gradient oracles instead of more expensive HVP oracles, the 
F${}^2$SA is more efficient in practice \citep{shen2025seal,xiao2025unlocking,jiang2025beyond} and is also the only method that can be scaled to $32$B sized large language model (LLM) training \citep{pan2024scalebio}.

\citet{kwon2023fully} proved that the F${}^2$SA method finds an $\epsilon$-stationary point of $\varphi(\vx)$ with
 $\tilde \gO(\epsilon^{-3})$ first-order oracle calls in the deterministic case and 
$\tilde \gO(\epsilon^{-7})$ stochastic first-order oracle (SFO) calls in the stochastic case. Recently, \citet{chen2023near} showed the two-time-scale stepsize strategy improves the upper complexity bound of F${}^2$SA method to $\tilde \gO(\epsilon^{-2})$ in the deterministic case, which is optimal up to logarithmic factors. However, the direct extension of their method in the stochastic case leads to the 
$\tilde \gO(\epsilon^{-6})$ SFO complexity \citep{chen2023near,kwon2024complexity} , which still has a significant gap between the $\Omega(\epsilon^{-4})$ lower bound for SGD \citep{arjevani2023lower}.
It remains open whether optimal rates for stochastic bilevel problems can be achieved for fully first-order methods.

In this work, we revisit F${}^2$SA and interpret it as using forward difference to approximate the hyper-gradient. Our novel interpretation in turn leads to straightforward algorithm extensions for the F${}^2$SA method. Observing that the forward difference used by F${}^2$SA only has a first-order error guarantee, a natural idea to improve the error guarantee is to use higher-order finite difference methods.
For instance, we know that the central difference has an improved second-order error guarantee. Based on this fact, we can derive the F${}^2$SA-2 method that solves the following symmetric penalty problem:
\begin{align} \label{eq:penalty-F2SA-2}
    \min_{\vx \in \sR^{d_x}, \vy \in \sR^{d_y}} \frac{1}{2} \left( f(\vx,\vy) + \lambda g(\vx,\vy) - \min_{\vz \in \sR^{d_y}} \left( - f( \vx,\vz) + \lambda g(\vx,\vz) \right) \right).
\end{align}
{Compared with Eq. (\ref{eq:penalty-Kwon}), this new penalty problem perturbs the lower-level variables $\vy$ and $\vz$ in the opposite direction to better cancel out the approximation errors to Problem (\ref{prob:main-BLO}).}
The connection between bilevel optimization and finite difference approximations was recently established by \citet{chayti2024new} in the context of meta-learning, but their results were limited to symmetric approximations. We extend their findings beyond meta-learning and to general finite difference approximations, addressing their conjecture about broader applicability. In this work, we show that F${}^2$SA-2 provably improves the SFO complexity of F${}^2$SA from $\tilde \gO(\epsilon^{-6})$ to $\tilde \gO(\epsilon^{-5})$ for second-order smooth problems. Moreover, our idea is generalizable for any $p$th-order smooth problems. It is known in numerical analysis there exists the $p$th-order central difference that uses $p$ points to construct an estimator to the derivative of a unitary function with $p$th-order error guarantee, as recalled in Lemma~\ref{lem:error-pth}. Motivated by this fact, we propose the F${}^2$SA-$p$ algorithm 
and show that it enjoys the improved $\tilde \gO(p \epsilon^{-4-2/p})$ SFO complexity, as formally stated in Theorem \ref{thm:upper-pth}.

Moreover, as formally stated in Theorem \ref{thm:lower}, it is easy to extend the $\Omega(\epsilon^{-4})$ lower bound for SGD \citep{arjevani2023lower} from single-level optimization to bilevel optimization using a fully separable construction that automatically satisfies all our additional smoothness conditions in Definition \ref{dfn:highly-smooth-blo}. It shows that F${}^2$SA-$p$ is optimal up to logarithmic factors when $p = \Omega(\log \epsilon^{-1} / \log \log \epsilon^{-1})$ (see Remark \ref{rmk:high-p-compl}). 
We summarize our main results in Table~\ref{tab:main-res} and discuss open problems in the following.




\paragraph{Open problems.} 
Our upper bounds improve known results for high-order smooth problems, but our result 
still has a gap between the $\Omega(\epsilon^{-4})$ lower bound for $p = \gO(\log \epsilon^{-1} / \log \log \epsilon^{-1})$.
Recently, \citet{kwon2024complexity} obtained some preliminary results towards closing this gap for $p=1$, where they showed an $\Omega(\epsilon^{-6})$ lower bound holds under a more adversarial oracle.  But it is still open whether their lower bounds can be extended to standard stochastic oracles.
Another open problem is the tightness of the condition number dependency, for which the current upper and lower bounds have a gap of $\Omega(\kappa^9)$ as demonstrated in Table \ref{tab:main-res}. Two recent concurrent works \citep{ji2025lower,chen2025condition} proposed tighter lower bounds for $p=1$: \citet{ji2025lower} showed a lower bound of $\Omega(\kappa^{5/2} \epsilon^{-4})$ and \citet{chen2025condition} showed that of  $\Omega(\kappa^{4} \epsilon^{-4})$. However, it is still open to fully close the gap in condition number dependency for both $p=1$ and $p \ge 2$.

\begin{table}[htbp]
    \centering
    \begin{tabular}{c c c c}
    \hline 
    Method   & Smoothness  &  Reference & Complexity  \\
    \hline \hline \addlinespace
    F${}^2$SA &  
    1st-order
    &\citep{kwon2023fully} & $\tilde \gO({\rm poly}(\kappa) \epsilon^{-7})$ \\ \addlinespace
    F${}^2$SA &
    1st-order &\citep{kwon2024complexity} & $\tilde \gO({\rm poly}(\kappa) \epsilon^{-6})$ \\ \addlinespace
    F${}^2$SA    & 1st-order & \citep{chen2023near} & $\tilde \gO(\kappa^{12} \epsilon^{-6})$
    \\  \addlinespace
    \vspace{-1mm}
    F${}^2$SA-$p$ & \cellcolor{gray!20} 1st-order  & Theorem \ref{thm:upper-pth} & $\tilde \gO(p \kappa^{9+2/p} \epsilon^{-4-2/p})$ \\ \vspace{-1mm}
    & \cellcolor{gray!20} + & \\
    Lower Bound & \cellcolor{gray!20} $p$th-order in $\vy$   & \citep{arjevani2023lower} & $\Omega(\epsilon^{-4})$ \\ \addlinespace
    \hline
    \end{tabular}
    \caption{The SFO complexity of different methods to find an $\epsilon$-stationary point for $p$th-order smooth first-order bilevel problems with condition number $\kappa$ under standard SGD assumptions. 
    }
    \label{tab:main-res}
\end{table}


\paragraph{Notations.}
We use $\Vert \,\cdot \, \Vert$ to denote the Euclidean norm for vectors and the spectral norm for matrices and tensors. 
For any set $S$ and functions $g, h :  S \rightarrow [0, \infty)$ we write $g = \gO(h)$ or $h = \Omega(g)$ equivalently if
there exists $c > 0$ such that $g(s) \le c  h(s)$ for every $s \in S$. 
We use $\tilde \gO(\,\cdot\,)$ and $\tilde \Omega(\,\cdot\,)$ to hide logarithmic factors in $\gO(\,\cdot\,)$ and $\Omega(\,\cdot\,)$. We  alsouse $h_1 \lesssim h_2$ to mean $h_1 = \gO(h_2)$,  
$h_1 \gtrsim h_2$ to mean $h_1 = \Omega(h_2)$, and $h_1 \asymp h_2$ to mean that both $h_1 \lesssim h_2$ and 
$h_1 \gtrsim h_2$ hold. Additional notations for tensors are introduced in Appendix~\ref{apx:notation-tensor}.








\section{Preliminaries}

The goal of bilevel optimization is to minimize the hyper-objective $\varphi(\vx)$, which is in general nonconvex. Since finding a global minimizer of a general nonconvex function requires exponential complexity in the worst case \citep[§ 1.6]{nemirovskij1983problem}, we follow the literature \citep{carmon2020lower,carmon2021lower} to consider the task of finding an approximate stationary point.

{\begin{dfn}
Let $\varphi : \sR^{d_x} \rightarrow \sR$ be the hyper-objective defined in Eq. (\ref{prob:main-BLO}). We say a random variable $\hat \vx \in \sR^{d_x}$ is an $\epsilon$-hyper-stationary point if $\E \Vert \nabla \varphi(\hat \vx) \Vert \le \epsilon$.
\end{dfn}}

Next, we introduce the assumptions used in this paper, which ensure the tractability of the above hyper-stationarity. Compared to \citep{kwon2023fully,chen2023near}, we additionally assume the high-order smoothness in lower-level variable $\vy$ to achieve further acceleration.

\subsection{Problem Setup}
First of all, we follow the standard assumptions on SGD \citep{arjevani2023lower} to assume that the stochastic gradient estimators satisfy the following assumption.
\begin{asm} \label{asm:stoc}
There exists stochastic gradient estimators $F(\vx,\vy)$ and $G(\vx,\vy)$ such that
\begin{align*}
    \E F(\vx,\vy; \xi) = \nabla f(\vx,\vy), \quad \E \Vert F(\vx,\vy) - \nabla f(\vx,\vy) \Vert^2 \le \sigma^2; \\
    \E G(\vx,\vy; \zeta) = \nabla g(\vx,\vy), \quad \E \Vert G(\vx,\vy) - \nabla g(\vx,\vy) \Vert^2 \le \sigma^2,
\end{align*}
where $\sigma>0$ is the variance of the stochastic gradient estimators.
We also partition $F = (F_x,F_y)$ and $G = (G_x,G_y)$ such that $F_x,F_y,G_x,G_y$ are estimators to $\nabla_x f, \nabla_y f, \nabla_x g, \nabla_y g$, respectively.
\end{asm}

Second, we assume that the hyper-objective $\varphi(\vx) = f(\vx,\vy^*(\vx))$ is lower bounded. Otherwise, any algorithm requires infinite time to find a stationary point. Note that the implicit condition $\inf_{\vx \in \sR^{d_x}} \varphi(\vx) > - \infty$ below can also be easily implied by a more explicit condition 
on the lower boundedness of upper-level function $\inf_{\vx \in \sR^{d_x}, \vy \in \sR^{d_y}} f(\vx,\vy) > - \infty$.

\begin{asm} \label{asm:phi-lb}
The hyper-objective defined in Eq. (\ref{prob:main-BLO}) is lower bounded, and we have
    \begin{align*}
        \varphi(\vx_0) - \inf_{\vx \in \sR^{d_x}} \varphi(\vx) \le \Delta,
    \end{align*}
where $\Delta>0$ is the initial suboptimality gap and we assume $\vx_0 = \vzero$ without loss of generality. 
\end{asm}

Third, we assume the lower-level function $g(\vx,\vy)$ is strongly convex in $\vy$.
It guarantees the uniqueness of $\vy^*(\vx)$ and the tractability of the bilevel problem. Although not all the problems in applications satisfy the lower-level strong convexity assumption, it is impossible to derive dimension-free upper bounds when the lower-level problem is only convex \citep[Theorem 3.2]{chen2024finding}. Hence, we follow most existing works to consider strongly convex lower-level problems.

\begin{asm} \label{asm:LLSC}
    $g(\vx,\vy)$ is $\mu$-strongly convex in $\vy$, \textit{i.e.}, for any
    $\vy_1,\vy_2 \in \sR^{d_y}$, we have
    \begin{align*}
        g(\vx,\vy_2) \ge g(\vx,\vy_1) + \langle \nabla_y g(\vx,\vy_1), \vy_2 - \vy_1 \rangle + \frac{\mu}{2} \Vert \vy_1 - \vy_2 \Vert^2,
    \end{align*}
    where $\mu>0$ is the strongly convex parameter. 
\end{asm}

Fourth, we require the following smoothness assumptions following \citep{ghadimi2018approximation}. 
According to Eq. (\ref{eq:hyper-grad}), these conditions are necessary and sufficient to guarantee the Lipschitz continuity of $\nabla \varphi(\vx)$,  which further ensure the tractability of an approximate stationary point of the nonconvex hyper-objective $\varphi(\vx)$ \citep{zhang2020complexity,kornowski2022oracle}.


\begin{asm} \label{asm:prior}
For the upper-lower function $f$ and lower-level function $g$,
we assume that 
\begin{enumerate}
     \item $f(\vx,\vy)$ is $L_0$-Lipschitz in $\vy$.
    \item $\nabla f(\vx,\vy)$ and $\nabla g(\vx,\vy)$ are $L_1$-Lipschitz jointly in $(\vx,\vy)$.
    \item $\nabla_{xy}^2 g(\vx,\vy)$ and $\nabla_{yy}^2 g (\vx,\vy)$ are $L_2$-Lipschitz jointly in $(\vx,\vy)$.
\end{enumerate}
\end{asm}

We refer to the problem class that jointly satisfies all the above 
Assumption \ref{asm:stoc}, \ref{asm:phi-lb}, \ref{asm:LLSC} and \ref{asm:prior} as first-order smooth bilevel problems, for which 
\citep{kwon2024complexity,chen2023near} showed the F${}^2$SA method achieves the $\tilde \gO(\epsilon^{-6})$ upper complexity bound. In this work, we show an improved bound under the following additional higher-order smoothness assumption on lower-level variable~$\vy$.
\begin{asm}[High order smoothness in $\vy$]  \label{asm:addition}
Given $p \in \sN_+$, we assume that
\begin{enumerate}
    \item $\frac{\partial^q}{\partial \vy^q} \nabla f(\vx,\vy)$ is $L_{q+1}$-Lipschitz for all $q = 1,\cdots,p-1$.
    \item $\frac{\partial^{q+1}}{\partial \vy^{q+1}} \nabla g(\vx,\vy)$ is $L_{q+2}$-Lipschitz in $\vy$ for all $q = 1,\cdots,p-1$.
\end{enumerate}
\end{asm}


We refer to problems jointly satisfying all the above assumptions as $p$th-order smooth bilevel problems, and also formally define their condition numbers as follows.

\begin{dfn}[$p$th-order smooth bilevel problems] \label{dfn:highly-smooth-blo}
Given $p \in \sN_+$, $\Delta>0$, $L_0, L_1,\cdots,L_{p+1} >0$, and $\mu \le L_1$, we use $\gF^{\text{nc-sc}}(L_0,\cdots,L_{p+1},\mu, \Delta)$ to denote the set of all bilevel instances satisfying Assumption \ref{asm:phi-lb}, \ref{asm:LLSC}, \ref{asm:prior} 
and \ref{asm:addition}. For this problem class, we define the largest smoothness constant $\bar L = \max_{0 \le j \le p} L_j$ and condition number $\kappa = \bar L / \mu$.
\end{dfn}

All our above assumptions align with \citep{chen2023near} except for the additional Assumption \ref{asm:addition}.
{A classic example of a highly smooth function is the softmax function
\citep[Lemma 2(3)]{garg2021near}. Therefore, many hyper-parameter tuning problems for logistic regression are provably highly smooth, such that our theory can be applied. We give two examples from \citep{pedregosa2016hyperparameter}: the first one aims to learn the optimal weights of each sample in a corrupted training set, and the second one aims to learn the optimal regularizer of each parameter.

\begin{exmp}[Data hyper-cleaning]
Let $\vx \in \sR^{n}$ parameterize the per-sample weight of a training set with $n$ samples via $\sigma(x_i) = \exp(x_i)/ \sum_{i=1}^n \exp(x_i)$  and $\vy \in \sR^{d}$ be the parameters of a linear model.
Let $\ell_{\rm val}$ be the logistic loss of the linear model on the validation set and $\ell_{\rm tr}^i$ be the logistic loss on the training sample $i$. The problem aims to solve
\begin{align*}
    \min_{\vx \in \sR^{n}}\ell_{\rm val}(\vy), \quad {\rm s.t.} \quad \vy \in \arg \min_{\vy \in \sR^{d}}  \sum_{i=1}^n \sigma(x_i) \ell_{\rm tr}^i(\vy).
\end{align*}
\end{exmp}

\begin{exmp}[Learn-to-regularize] \label{exmp:l2reg}
Let $\vx \in \sR^d$ parameterize the regularization matrix via $\mW_\vx = {\rm diag} (\exp(\vx))$, and $\vy \in \sR^d$ be the parameters of a linear model.
Let $\ell_{\rm val}$ and $\ell_{\rm tr}$ be the logistic loss of the linear model on the validation set and training set, respectively.
The problem  aims to solve
 \begin{align*}
    \min_{\vx \in \sR^{d}} \ell_{\rm val}(\vy), \quad {\rm s.t.} \quad \vy \in \arg \min_{\vy \in \sR^{d}} \ell_{\rm tr}(\vy) + \vy^\top \mW_{\vx} \vy.
\end{align*}
\end{exmp}}


\subsection{Comparison to Previous Works}

Before we show our improved upper bound, we first give a detailed discussion on other assumptions made in related works.



\vspace{-2mm}
\paragraph{Stochastic Hessian assumption.}
\citet{ghadimi2018approximation,ji2021bilevel}  assumes the access to a stochastic Hessian estimator $\mH(\vx,\vy)$ such that 
\begin{align} \label{eq:asm-stocBio}
\E \mH(\vx,\vy) = \nabla^2 g(\vx,\vy), \quad \E \Vert \mH(\vx,\vy) - \nabla^2 g(\vx,\vy) \Vert \le \sigma^2.
\end{align}
According to \citep[Observation 1 and 2]{arjevani2020second}, such an assumption is stronger than standard SGD assumptions and equivalent to the mean-squared-smoothness assumption~(\ref{eq:asm-vr}) on the lower-level gradient estimator $G$ under the mild condition of $\nabla G(\vx,\vy) = \mH(\vx,\vy)$.
Under this assumption, in conjunction with Assumption \ref{asm:phi-lb}, \ref{asm:LLSC}, and \ref{asm:prior}, \citet{ghadimi2018approximation} proposed the
BSA method that can 
find an $\epsilon$ stationary point of $\varphi(\vx)$ with $\tilde \gO(\epsilon^{-6})$ SFO complexity and
$\tilde \gO(\epsilon^{-4})$ stochastic HVP complexity.
Later,
\citet{ji2021bilevel} further improved
the SFO complexity term to $\tilde \gO(\epsilon^{-4})$. Compared to them, we consider the setting where the algorithms only have access to stochastic gradient estimators, and make no assumptions on the stochastic Hessians.

\vspace{-2mm}
\paragraph{Mean-squared smoothness assumption.}
Besides Assumption  \ref{asm:stoc},  \ref{asm:phi-lb}, \ref{asm:LLSC},  \ref{asm:prior} and the stochastic Hessian assumption~(\ref{eq:asm-stocBio}),
\citet{khanduri2021near,yang2021provably,yang2023achieving} further assumes that the stochastic estimators to gradients and Hessians are mean-squared smooth:
\begin{align} \label{eq:asm-vr}
\begin{split}
     \E \Vert F(\vx,\vy) - F(\vx',\vy') \Vert^2 \le \bar L_1^2 \Vert (\vx,\vy) - (\vx',\vy') \Vert^2, \\
    \E \Vert G(\vx,\vy) - G(\vx',\vy') \Vert^2 \le \bar L_1^2 \Vert (\vx,\vy) - (\vx',\vy') \Vert^2, \\
    \E \Vert \mH(\vx,\vy) - \mH(\vx',\vy') \Vert^2 \le \bar L_2^2 \Vert (\vx,\vy) - (\vx',\vy') \Vert^2.
\end{split}
\end{align}
Under this additional assumption, they proposed faster stochastic methods with upper complexity bound of $\tilde \gO(\epsilon^{-3})$ via variance reduction \citep{fang2018spider,cutkosky2019momentum}. In this paper, we only consider the setting without mean-squared smoothness assumptions and study a different acceleration mechanism from variance reduction.


\vspace{-2mm}
\paragraph{Jointly high-order smoothness assumption.
} \citet{huang2025efficiently} introduced a second-order smoothness assumption similar to but stronger than Assumption \ref{asm:addition} when $p=2$. Specifically, they assumed the second-order smoothness jointly in $(\vx,\vy)$ instead of $\vy$ only: 
\begin{align} \label{eq:asm-escape}
    \begin{split}
        \nabla^2 f(\vx,\vy) \text{ is } L_2\text{-Lipschitz jointly in } (\vx,\vy); \\
        \nabla^3  g(\vx,\vy) \text{ is } L_3\text{-Lipschitz jointly in } (\vx,\vy).
    \end{split}
\end{align}
The jointly second-order smoothness (\ref{eq:asm-escape}) ensures that the hyper-objective $\varphi(\vx)$ has Lipschitz continuous Hessians, which further allows the application of known techniques in minimizing second-order smooth objectives. \citet{huang2025efficiently} applied the technique from \citep{jin2017escape,jin2021nonconvex,xu2018first,allen2018neon2} to show that an HVP-based method can find a second-order stationary point in $\tilde \gO(\epsilon^{-2})$ complexity under the deterministic setting, and in $\tilde \gO(\epsilon^{-4})$ under the stochastic Hessian assumption (\ref{eq:asm-stocBio}). \citet{yang2023accelerating} applied the technique from \citep{li2023restarted} to accelerate the complexity HVP-based method to $\tilde \gO(\eps^{-1.75})$ in the deterministic setting. 
\citet{chen2023near} also
proposed a fully first-order method to achieve the same $\tilde \gO(\eps^{-1.75})$ complexity. Compared to these works, our work demonstrates a unique acceleration mechanism in stochastic bilevel optimization that only comes from the high-order smoothness in $\vy$.

\section{The F${}^2$SA-$p$ Method}

To introduce our method, we first recall the prior F${}^2$SA method \citep{kwon2023fully} and establish their relationship between 
finite difference schemes, which further motivates us to design better algorithms by using better finite difference formulas.



\subsection{Hyper-Gradient Approximation via Finite Difference} \label{subsec:relation}


The core idea of F${}^2$SA \citep{kwon2023fully} is to solve the reformulated penalty problem~(\ref{eq:penalty-Kwon}) and use the gradient of the penalty function to approximate the true hyper-gradient. 
To make connections of F${}^2$SA to the finite difference method, let us introduce the extra notation $g_{\nu}$ as the perturbed lower-lever problem with $\vy_{\nu}^*(\vx)$ and $\ell_{\nu}(\vx)$ being its optimal solution and optimal value, respectively:
\begin{align*}
   g_{\nu}(\vx,\vy) &:= \nu f(\vx,\vy)+ g(\vx,\vy), \\
   \vy_{\nu}^*(\vx) &:= \arg \min_{\vy \in \sR^{d_y}} g_{\nu}(\vx,\vy), \\
   \ell_\nu(\vx) &:= \min_{\vy \in \sR^{d_y}} g_{\nu}(\vx,\vy),
\end{align*}
{Then we have
$\frac{\partial}{\partial \nu} \ell_\nu(\vx) \vert_{\nu=0} = \lim_{\nu \rightarrow 0} \frac{\ell_\nu(\vx) - \ell_0(\vx))}{\nu} = \lim_{\nu \rightarrow 0} f(\vx,\vy_\nu^*(\vx)) + \frac{g(\vx,\vy_\nu^*(\vx)) - g(\vx,\vy^*(\vx))}{\nu}$.} In our notation, we can rewrite 
\citep[Lemma B.3]{chen2023near} as
$\frac{\partial}{\partial \nu} \ell_\nu(\vx) \vert_{\nu=0} = \varphi(\vx)$. Similarly, we can also rewrite \citep[Lemma 3.1]{kwon2023fully} as
\begin{align} \label{eq:nu-x-commute}
    \frac{\partial^2}{\partial \nu \partial \vx} \ell_\nu(\vx) \vert_{\nu=0} =  \frac{\partial^2}{\partial \vx \partial \nu} \ell_\nu(\vx) \vert_{\nu=0} = \nabla \varphi(\vx).
\end{align}
Let $\nu = 1/\lambda$ in Eq. (\ref{eq:penalty-Kwon}).
Then the fully first-order hyper-gradient estimator \citep{kwon2023fully,chen2023near} is exactly using forward difference to approximate $\nabla \varphi(\vx)$, that is,
\begin{align} \label{eq:prior-hypergrad-est}
     \frac{\frac{\partial}{\partial \vx} \ell_{\nu}(\vx) - \frac{\partial}{\partial \vx} \ell_0(\vx)}{\nu} \approx \frac{\partial^2}{\partial \nu \partial \vx} \ell_{\nu}(\vx) \vert_{\nu=0} = \nabla \varphi(\vx).
\end{align}
However, the forward difference is not the only way to approximate a derivative. Essentially, it falls into a general class of $p$th-order finite difference \citep{atkinson2005finite} that 
can guarantee an $\gO(\nu^p)$ approximation error. We restate this known result (with generalization to vector-valued functions) in the following lemma and provide a self-contained proof in Appendix \ref{apx:proof-err-pth} for completeness.
{\begin{lem} \label{lem:error-pth} 
Assume the function $\psi: \sR\rightarrow \sR^d$ has $C$-Lipschitz continuous $p$th-order derivative. 
There exist coefficients $\{ \alpha_j\}$ such that
\begin{align*}
    \left \Vert \frac{1}{\nu} \sum_{j} \alpha_j \psi(j \nu) - \psi'(0) \right \Vert = \gO(C\nu^p).
\end{align*}
If $p$ is even, the indices run $j = -p/2,\cdots, p/2$. If $p$ is odd, they run $j =-(p-1)/2, \cdots, (p+1)/2$. Furthermore, all the coefficients satisfy $\vert j \alpha_j \vert \le 1$ for all $j \ne 0$ and $\vert \alpha_0 \vert \le \mathbb{I}[p \text{ is odd}]$.
\end{lem}}
The explicit formulas for $\alpha_j$ can be found in Appendix \ref{apx:proof-err-pth}.
When $p=1$, we have $\alpha_0= -1$, $\alpha_1 = 1$, and we obtain the 
forward difference estimator $\psi(\nu) - \psi(0) / \nu$;
When $p=2$ we have $\alpha_{-1}=-1/2, \alpha_1=1/2 $ and we obtain the 
central difference estimator $ (\psi(\nu) - \psi(-\nu)) / (2\nu)$. Lemma \ref{lem:error-pth} tells us that in general we can always construct a finite difference estimator $\gO(\nu^p)$ error with $p$ points for even $p$ or $p+1$ points for odd $p$ under the given smoothness conditions. Inspired by Lemma~\ref{lem:error-pth} and Eq. (\ref{eq:nu-x-commute}) that $\frac{\partial^2}{\partial \nu \partial \vx} \ell_\nu(\vx) \vert_{\nu=0} = \nabla \varphi(\vx)$, we propose a fully first-order estimator via a linear combination of $\frac{\partial}{\partial \vx} \ell_{j\nu}(\vx)$ to achieve $\gO(\nu^p)$ approximation error to $\nabla \varphi(\vx)$ given that $\frac{\partial^{p+1}}{\partial \nu^p \partial \vx} \ell_\nu(\vx)$ is Lipschitz continuous in $\nu$.
It further leads to Algorithm \ref{alg:F2SA-p} that will be formally introduced in the next subsection.




\subsection{The Proposed Algorithm} \label{subsec:algo}

\begin{algorithm*}[htbp]  
\caption{F${}^2$SA-$p$ $(\vx_0,\vy_0)$, ~ even $p$} \label{alg:F2SA-p}
\begin{algorithmic}[1] 
\STATE $ \vy_0^j = \vy_0$, ~$\forall j \in \sN$ \\[0.3mm]
\STATE \textbf{for} $ t =0,1,\cdots,T-1 $ \\[0.3mm]
\STATE \quad \textbf{parallel for} $j = -p/2,-p/2+1, \cdots,p/2$ \\[0.3mm]
\STATE \quad \quad $ \vy_{t}^{j,0} =  \vy_{t}^j$ \\[0.3mm]
\STATE \quad \quad \textbf{for} $ k =0,1,\cdots,K-1$ \\[0.3mm]
\STATE \quad \quad \quad Sample random i.i.d indexes $\{ (\xi^y_j,\zeta^y_j) \}$. \\[0.3mm]
\STATE \quad \quad \quad $ \vy_{t}^{j,k+1} = \vy_{t}^{j,k} - \eta_y \left( j \nu F_y (\vx_t,\vy_{t}^{j,k}; \xi^y_j) +  G_{y}(\vx_t,\vy_{t}^{j,k}; \zeta^y_j) \right)$ \\[0.3mm]
\STATE \quad \quad  \textbf{end for} \\[0.3mm]
\STATE \quad \quad $\vy_{t+1}^j = \vy_{t}^{j,K}$ \\[0.3mm]
\STATE \quad \textbf{end parallel for} \\[0.3mm]
\STATE \quad Sample random i.i.d indexes $\{ ({\xi_i^x},{\zeta_i^x}) \}_{i=1}^S. $ \\[0.3mm]
\STATE \quad Let $\{ \alpha_j\}_{j=-p/2}^{p/2}$ be the $p$th-order finite difference coefficients defined in Lemma \ref{lem:error-pth}. \\[0.3mm]
\STATE \quad $ \Phi_t = \frac{1}{S} \sum_{i=1}^S \sum_{j=-p/2}^{p/2} \alpha_j \left(
j F_x(\vx_t,\vy_{t+1}^j;\xi^{x}_i) + \dfrac{G_x(\vx_t,\vy_{t+1}^j;\zeta^{x}_i)}{\nu} \right)$ \\[0.3mm]
\STATE \quad  $\vx_{t+1} = \vx_t -  \eta_x {\Phi_t} / {\Vert \Phi_t \Vert}$ \\[0.3mm]
\STATE \textbf{end for}
\end{algorithmic}
\end{algorithm*}

Due to space limitations, we only present Algorithm \ref{alg:F2SA-p} designed for even $p$ in the main text. The algorithm for odd $p$ can be designed similarly, and we defer the concrete algorithm to Appendix~\ref{apx:proof-thm-upper-pth}.

Algorithm \ref{alg:F2SA-p} follows the double-loop structure of F${}^2$SA \citep{chen2023near,kwon2024complexity} and changes the hyper-gradient estimator to the one introduced in the previous section. Now, we give a more detailed introduction to the procedures of the two loops of F${}^2$SA-$p$.
\begin{enumerate}
    \item In the outer loop, the algorithm first samples a mini-batch with size $S$ and uses Lemma~\ref{lem:error-pth} to construct $\Phi_t$ via the 
linear combination of~$\frac{\partial}{\partial \vx} \ell_{j \nu}(\vx_t)$ for $j = -p/2,\cdots,p/2$ every iteration. After obtaining  $\Phi_t$ as an approximation to 
$\nabla \varphi(\vx_t)$, the algorithm then performs a normalized gradient descent step $\vx_{t+1} = \vx_t - \eta_x \Phi_t / \Vert \Phi_t \Vert$ with total $T$ iterations.
\item In the inner loop, the algorithm returns 
an approximation to $\frac{\partial}{\partial \vx}  \ell_{j \nu}(\vx_t)$ for all $j = -p/2,\cdots,p/2$. Note that Danskin's theorem indicates $\frac{\partial}{\partial \vx} \ell_{j \nu} (\vx_t) = \frac{\partial}{\partial \vx} g_{j\nu}(\vx_t,\vy_{j \nu}^*(\vx_t))$. It suffices to approximate $\vy_{j \nu}^*(\vx_t)$ to sufficient accuracy, which is achieved by taking a $K$-step single-batch SGD subroutine with stepsize $\eta_y$ on each function $g_{j \nu}(\vx,\,\cdot\,)$.
\end{enumerate}
\begin{rmk}[Effect of normalized gradient step]
Compared to \citep{chen2023near,kwon2023fully}, the only modification we make to the outer loop is to change the gradient step to a normalized gradient step. The normalization can control the change of $\vy_{j\nu}^*(\vx_t)$ and make the analysis of inner loops easier. We believe that all our theoretical guarantees also hold for the standard gradient step via a more involved analysis.
\end{rmk}

\subsection{Complexity Analysis} \label{subsec:complexity}

This section contains the complexity analysis of Algorithm \ref{alg:F2SA-p}. We first derive the following lemma from the high-dimensional Faà di Bruno formula \citep{licht2024higher}.





\begin{lem} \label{lem:solu-Lip-pth}
Let $\nu \in (0,1/(2\kappa)]$.
For any instance in the $p$th-order smooth bilevel problem class $\gF^{\text{nc-sc}}(L_0,\cdots,L_{p+1},\mu, \Delta)$ as Definition \ref{dfn:highly-smooth-blo},
$\frac{\partial^{p+1}}{\partial \nu^p \partial \vx} \ell_\nu(\vx)$ is $\gO(\kappa^{2p+1} \bar L)$-Lipschitz continuous in $\nu$.
\end{lem}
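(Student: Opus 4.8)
\textbf{Proof proposal for Lemma~\ref{lem:solu-Lip-pth}.}

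The plan is to bound the Lipschitz constant of $\frac{\partial^{p+1}}{\partial \nu^p \partial \vx} \ell_\nu(\vx)$ in $\nu$ by showing that $\frac{\partial^{p+2}}{\partial \nu^{p+1} \partial \vx} \ell_\nu(\vx)$ is bounded in operator norm by $\gO(\kappa^{2p+1}\bar L)$. First I would invoke Danskin's theorem to write $\frac{\partial}{\partial \vx}\ell_\nu(\vx) = \nabla_x g_\nu(\vx, \vy_\nu^*(\vx)) = \nu \nabla_x f(\vx, \vy_\nu^*(\vx)) + \nabla_x g(\vx, \vy_\nu^*(\vx))$, so that everything reduces to controlling $\nu$-derivatives of $\vy_\nu^*(\vx)$ and then propagating them through $f$ and $g$ by the chain rule. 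The key structural fact is that $\vy_\nu^*(\vx)$ solves the optimality condition $\nu \nabla_y f(\vx, \vy_\nu^*(\vx)) + \nabla_y g(\vx, \vy_\nu^*(\vx)) = \vzero$; implicit differentiation in $\nu$ gives the first derivative $\partial_\nu \vy_\nu^* = -[\nabla^2_{yy} g_\nu]^{-1}\nabla_y f$, whose norm is $\gO(\kappa L_0/\bar L) = \gO(L_0/\mu)$ since $\nabla^2_{yy}g_\nu \succeq \mu \mathbf{I}$ for $\nu \le 1/(2\kappa)$ (here we use that $\nu L_1 \le \mu/2$ so the perturbation does not destroy strong convexity).

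The main work is then to iterate this, differentiating the optimality condition $p+1$ times in $\nu$. This is precisely where the high-dimensional Faà di Bruno formula of \citep{licht2024higher} enters: the $m$th $\nu$-derivative of the composite map $\nu \mapsto (\text{optimality residual})$ is a sum over partitions, each term being a multilinear contraction of some mixed partial tensor of $f$ or $g$ (of order at most $m$ in $\vy$, hence Lipschitz/bounded by Assumptions~\ref{asm:prior} and~\ref{asm:addition} once we also use $L_0$-Lipschitzness of $f$ in $\vy$ to bound lower-order $f$-derivatives) against lower-order derivatives $\partial_\nu^{q}\vy_\nu^*$ for $q < m$. Setting the residual's derivative to zero and isolating the top-order term $\partial_\nu^{m}\vy_\nu^*$ shows it equals $-[\nabla^2_{yy}g_\nu]^{-1}$ times a sum of such contractions; an induction on $m$ then gives $\|\partial_\nu^m \vy_\nu^*\| = \gO(\kappa^{?}\,\mathrm{poly})$ with the power of $\kappa$ growing linearly in $m$. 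Each inverse Hessian contributes a factor $1/\mu = \gO(\kappa/\bar L)$, and tracking that $m$ ranges up to $p+1$ (with the extra $\partial_x$ contributing one more chain-rule layer and one more Hessian-inverse when differentiating $\vy_\nu^*(\vx)$ in $\vx$) yields the claimed exponent $2p+1$ on $\kappa$ and a single overall factor of $\bar L$.

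The hardest part, and the step I would be most careful about, is the bookkeeping in the induction: correctly identifying, for the mixed derivative $\frac{\partial^{p+2}}{\partial \nu^{p+1}\partial \vx}$, how many factors of $[\nabla^2_{yy}g_\nu]^{-1}$ accumulate (each one costing a power of $\kappa$) versus how many bounded-tensor factors appear, so that the final count is exactly $2p+1$ rather than something like $p^2$ or $3p$. Concretely I expect the recursion $a_m = a_{m-1} + (\text{contributions from lower terms, each already } \gO(\kappa^{2(m-1)+1}))$ plus one inverse-Hessian factor, giving $a_m \le a_{m-1} + 2$, i.e. linear growth with slope $2$; pinning down that slope is exactly the "$2$" in "$2p+1$". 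A secondary subtlety is keeping the dependence on $\bar L$ homogeneous of degree one — this works because the optimality condition is, up to the $\nu$ prefactor, degree-one in the smoothness constants and each differentiation replaces a bounded derivative by a bounded higher derivative without multiplying $\bar L$'s; only the Hessian inverse brings in $1/\mu$, which combines with the $\bar L$'s to form powers of $\kappa$ times a single leftover $\bar L$. Once $\|\frac{\partial^{p+2}}{\partial \nu^{p+1}\partial \vx}\ell_\nu\| = \gO(\kappa^{2p+1}\bar L)$ is established uniformly on $\nu \in (0, 1/(2\kappa)]$, the mean value theorem in $\nu$ delivers the stated Lipschitz bound.
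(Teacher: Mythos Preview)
Your proposal is essentially the paper's proof: Danskin's theorem to express $\partial_{\vx}\ell_\nu$, the high-dimensional Fa\`a di Bruno formula of \citep{licht2024higher} applied both to $\partial_{\vx}\ell_\nu$ and to the optimality condition $\nabla_y g_\nu(\vx,\vy_\nu^*(\vx))=\vzero$ to obtain a recursion for $\partial_\nu^m\vy_\nu^*$, and an induction on $m$ that counts Hessian-inverse factors to land on the exponent $2p+1$.

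The one technical point where the paper proceeds differently is that it does \emph{not} bound $\partial_\nu^{p+1}\partial_{\vx}\ell_\nu$ and then invoke the mean value theorem. Instead it tracks Lipschitz constants directly through the Fa\`a di Bruno sum via a ``product of bounded-and-Lipschitz maps is bounded-and-Lipschitz'' lemma (Lemma~\ref{lem:composition}), carrying in the induction both an $\gO(\kappa^{2q-1})$ bound and an $\gO(\kappa^{2q+1})$ Lipschitz constant for $\partial_\nu^{q}\vy_\nu^*$. Your plan to take one more $\nu$-derivative implicitly requires $(p+2)$nd-order $\vy$-derivatives of $g_\nu$ (and $(p+1)$st of $f$), which Assumption~\ref{asm:addition} does not supply---it stops at $L_{p+1}$-Lipschitz for $\frac{\partial^{p}}{\partial\vy^{p}}\nabla g$. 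The fix is exactly what the paper does: combine boundedness and Lipschitz via the product rule at each level of the partition sum rather than differentiating once more, so that the conclusion is Lipschitz continuity of $\partial_\nu^{p}\partial_{\vx}\ell_\nu$ rather than boundedness of $\partial_\nu^{p+1}\partial_{\vx}\ell_\nu$.
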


Our result generalizes the prior result for $p=1$ \citep{kwon2023fully} to any $p \in \sN_+$ and also tightens the prior bounds for $p=2$ \citep{chen2023near} as we remark in the following.

\begin{rmk}[Tighter bounds for $p=2$]
Note that the
variables $\vx$ and $
\nu$ play equal roles in our analysis. Therefore, our result in $p=2$ essentially implies that $\frac{\partial^3}{\partial \nu \partial \vx^2} \ell_\nu(\vx)$ is $\gO(\kappa^5 \bar L)$-Lipschitz continuous in $\nu$ around zero, which tightens the $\gO(\kappa^6 \bar L)$ bound of Hessian convergence in \citep[Lemma 5.1a]{chen2023near} and is of independent interest. The main insight is to avoid the direct calculation of $\nabla^2 \varphi (\vx) = \frac{\partial^3}{\partial \nu \partial \vx^2} \ell_\nu(\vx) \vert_{\nu = 0} $ which involves third-order derivatives and makes the analysis more complex, but instead always to analyze it through the limiting point $\lim_{\nu \rightarrow 0} \frac{\partial^3}{\partial \nu \partial \vx^2}  \ell_\nu(\vx)$.
\end{rmk}


Recall Eq. (\ref{eq:nu-x-commute}) that $\frac{\partial^2}{\partial \nu \partial \vx} \ell_{\nu}(\vx) \vert_{\nu=0} =  \nabla \varphi(\vx)$. Then Lemma \ref{lem:solu-Lip-pth}, in conjunction with Lemma \ref{lem:error-pth}, indicates that 
the $p$th-order finite difference used in F${}^2$SA-$p$ guarantees an $\gO(\nu^p)$-approximation error to $\nabla \varphi(\vx)$, which always improves the $\gO(\nu)$-error guarantee of F${}^2$SA \citep{kwon2023fully,chen2023near} for any $p \ge 2$. This improved error guarantee 
means that we can set $\nu = \gO(\epsilon^{1/p})$ to obtain an $\gO(\epsilon)$-accurate hyper-gradient estimator to $\nabla \varphi(\vx)$, which further leads to the following improved complexity of our algorithm.



\begin{thm}[Main theorem] \label{thm:upper-pth}
For any instance in the $p$th-order smooth bilevel problem class $\gF^{\text{nc-sc}}(L_0,\cdots,L_{p+1},\mu, \Delta)$ as per Definition \ref{dfn:highly-smooth-blo},
set the hyper-parameters as
\begin{align} \label{eq:hyper-setting-pth}
\begin{split}
    &\nu \asymp \min \left\{ 
    \frac{R}{\kappa}, \left( \frac{\epsilon}{\bar L \kappa^{2p+1}} \right)^{1/p} \right\}, ~~\eta_x \asymp \frac{\epsilon}{L_1 \kappa^3}, ~~ \eta_y \asymp \frac{\nu^2 \epsilon^2}{L_1 \kappa \sigma^2}, 
    \\
    &\quad \quad S \asymp \frac{\sigma^2}{\nu^2 \epsilon^2}, 
    ~~K \asymp 
    \frac{\kappa^2 \sigma^2}{\nu^2 \epsilon^2} \log \left(
    \frac{R L_1 \kappa}{\nu \epsilon}
    \right), ~~T \asymp \frac{\Delta}{\eta_x \epsilon},
\end{split}
\end{align}
where $R = \Vert \vy_0 - \vy^*(\vx_0) \Vert$. Run Algorithm \ref{alg:F2SA-p} if $p$ is even or Algorithm \ref{alg:F2SA-p-odd} (in Appendix \ref{apx:proof-thm-upper-pth})  if $p$ is odd. Then we can provably find an $\epsilon$-stationary point of $\varphi(\vx)$ with the total SFO calls upper bounded by
\begin{align*}
    p T(S+K)  = \gO \left( \frac{p \Delta L_1 \bar L^{2/p} \sigma^2\kappa^{9+ 2/p}}{\epsilon^{4+2/p}} 
    \log \left( \frac{R L_1 \bar L \kappa}{\epsilon} \right)
    \right).
\end{align*}
\end{thm}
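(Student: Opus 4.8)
The plan is to analyze Algorithm \ref{alg:F2SA-p} as \emph{inexact normalized gradient descent} on the hyper-objective $\varphi$, following the double-loop template of the $p=1$ analyses in \citep{chen2023near,kwon2024complexity}. Two facts are needed. The first is the classical smoothness estimate that $\nabla\varphi$ is $L_\varphi$-Lipschitz with $L_\varphi = \gO(L_1\kappa^3)$ (consistent with the choice of $\eta_x$), which follows from Assumptions \ref{asm:LLSC}, \ref{asm:prior} and is also the $p=1$ shadow of the machinery behind Lemma \ref{lem:solu-Lip-pth}. The second, which is the crux, is the per-iteration guarantee
\begin{align*}
  \E\Vert \Phi_t - \nabla\varphi(\vx_t) \Vert^2 = \gO(\epsilon^2)
\end{align*}
under the parameter choice in Eq.~(\ref{eq:hyper-setting-pth}). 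Granting these, a standard descent argument closes the proof; the total SFO count is then just $pT(S+K)$ by inspecting the two loops of the algorithm, the factor $p$ coming from the $p+1$ inner subproblems solved per outer iteration.

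To prove the crux estimate I would decompose, writing $\bar{\Phi}_t := \frac{1}{\nu}\sum_{j}\alpha_j \frac{\partial}{\partial\vx} g_{j\nu}(\vx_t,\vy_{t+1}^j)$ for the conditional mean of $\Phi_t$ given the inner iterates,
\begin{align*}
  \Phi_t - \nabla\varphi(\vx_t)
  = \underbrace{\big(\Phi_t - \bar{\Phi}_t\big)}_{(\mathrm{I})}
  + \underbrace{\Big(\bar{\Phi}_t - \tfrac{1}{\nu}\sum_{j}\alpha_j \tfrac{\partial}{\partial\vx}\ell_{j\nu}(\vx_t)\Big)}_{(\mathrm{II})}
  + \underbrace{\Big(\tfrac{1}{\nu}\sum_{j}\alpha_j \tfrac{\partial}{\partial\vx}\ell_{j\nu}(\vx_t) - \nabla\varphi(\vx_t)\Big)}_{(\mathrm{III})}.
\end{align*}
Term $(\mathrm{III})$ is pure finite-difference bias: applying Lemma \ref{lem:error-pth} coordinate-wise to $s \mapsto \frac{\partial}{\partial\vx}\ell_{s}(\vx_t)$, whose $p$th $s$-derivative is $\gO(\kappa^{2p+1}\bar L)$-Lipschitz by Lemma \ref{lem:solu-Lip-pth}, and using Eq.~(\ref{eq:nu-x-commute}) to identify the leading term as $\nabla\varphi(\vx_t)$, gives $\Vert(\mathrm{III})\Vert = \gO(\bar L\kappa^{2p+1}\nu^p) = \gO(\epsilon)$ for the chosen $\nu$. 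Term $(\mathrm{I})$ is mean-zero with second moment at most $S^{-1}$ times the variance of a single summand $\sum_j\alpha_j(jF_x + G_x/\nu)$; since the finite-difference coefficients are bounded and the $G_x/\nu$ piece dominates, this is $\gO(\sigma^2/(S\nu^2)) = \gO(\epsilon^2)$, and this $\nu^{-2}$ variance inflation is exactly why the mini-batch size must be $S \asymp \sigma^2/(\nu^2\epsilon^2)$. Term $(\mathrm{II})$ is the inner-loop inexactness: by Danskin's theorem $\frac{\partial}{\partial\vx}\ell_{j\nu}(\vx_t) = \frac{\partial}{\partial\vx}g_{j\nu}(\vx_t,\vy^*_{j\nu}(\vx_t))$, and since $\frac{\partial}{\partial\vx}g_{j\nu}(\vx,\cdot)$ is $\gO(L_1)$-Lipschitz in $\vy$, $\Vert(\mathrm{II})\Vert \le \gO(L_1/\nu)\max_j\Vert\vy_{t+1}^j - \vy^*_{j\nu}(\vx_t)\Vert$, so it suffices to show this distance is $\gO(\nu\epsilon/L_1)$ in mean-square.

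The inner-loop bound is where I expect the real work, because the targets $\vy^*_{j\nu}(\vx_t)$ drift as the outer iterate moves, so it must be analyzed as a recursion coupled to the outer loop. For $\nu$ small enough (which Eq.~(\ref{eq:hyper-setting-pth}) guarantees), each $g_{j\nu} = g + j\nu f$ is $(\mu/2)$-strongly convex and $\gO(L_1)$-smooth in $\vy$, uniformly over $|j| \le p/2$, so the $K$-step warm-started single-batch SGD obeys
\begin{align*}
  \E\Vert \vy_{t+1}^j - \vy^*_{j\nu}(\vx_t) \Vert^2 \lesssim (1 - \Omega(\eta_y\mu))^K \E\Vert \vy_{t}^j - \vy^*_{j\nu}(\vx_t) \Vert^2 + \frac{\eta_y\sigma^2}{\mu}.
\end{align*}
Because the outer step is normalized, $\Vert\vx_{t+1}-\vx_t\Vert = \eta_x$, so the target moves by at most $\gO(\kappa\eta_x)$ per outer step; feeding this drift into the recursion and unrolling shows that $K \asymp \kappa^2\sigma^2\nu^{-2}\epsilon^{-2}\log(RL_1\kappa/(\nu\epsilon))$ drives the contraction factor small enough to absorb both the warm-start error $\Vert\vy_0-\vy^*(\vx_0)\Vert^2 = R^2$ and the accumulated drift, while $\eta_y \asymp \nu^2\epsilon^2/(L_1\kappa\sigma^2)$ makes the residual-variance term small enough that, after the $\nu^{-1}$ amplification in $(\mathrm{II})$, its contribution is $\gO(\epsilon)$; the logarithm in $K$ is precisely what is needed to contract the initial warm start down to target accuracy. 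The delicate points are keeping all constants uniform over $|j| \le p/2$, tracking how the $\nu^{-1}$ inflation multiplies every error source, and controlling the $p$-dependence through the explicit form of the finite-difference coefficients.

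Combining $(\mathrm{I})$--$(\mathrm{III})$ gives $\E\Vert\Phi_t - \nabla\varphi(\vx_t)\Vert = \gO(\epsilon)$. Then $L_\varphi$-smoothness of $\varphi$ and the elementary inequality $\langle\nabla\varphi(\vx_t),\Phi_t/\Vert\Phi_t\Vert\rangle \ge \Vert\nabla\varphi(\vx_t)\Vert - 2\Vert\Phi_t-\nabla\varphi(\vx_t)\Vert$ yield $\E\varphi(\vx_{t+1}) \le \E\varphi(\vx_t) - \eta_x\E\Vert\nabla\varphi(\vx_t)\Vert + \gO(\eta_x\epsilon) + \frac{1}{2}L_\varphi\eta_x^2$; telescoping over $t=0,\dots,T-1$ and using Assumption \ref{asm:phi-lb} gives $\frac{1}{T}\sum_t \E\Vert\nabla\varphi(\vx_t)\Vert \le \frac{\Delta}{\eta_x T} + \frac{1}{2}L_\varphi\eta_x + \gO(\epsilon) = \gO(\epsilon)$ with $\eta_x \asymp \epsilon/(L_1\kappa^3)$ and $T \asymp \Delta/(\eta_x\epsilon)$, so a uniformly random iterate is $\gO(\epsilon)$-stationary (rescale $\epsilon$). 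Substituting $\nu,\eta_x,S,K,T$ into $pT(S+K)$, using $S+K = \gO(K)$ and $\nu^{-2} \asymp \bar L^{2/p}\kappa^{4+2/p}\epsilon^{-2/p}$, produces the stated bound. The odd-$p$ case is identical, using the forward-difference coefficients $\{\beta_j\}$ of Lemma \ref{lem:error-pth} and Algorithm \ref{alg:F2SA-p-odd}.
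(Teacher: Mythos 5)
Your proposal is correct and follows essentially the same route as the paper's proof: the same three-way error decomposition (mini-batch variance controlled by $S\asymp\sigma^2/(\nu^2\epsilon^2)$, finite-difference bias controlled via Lemmas \ref{lem:error-pth} and \ref{lem:solu-Lip-pth} by the choice of $\nu$, and inner-loop inexactness reduced to $\Vert\vy_{t+1}^j-\vy_{j\nu}^*(\vx_t)\Vert=\gO(\nu\epsilon/L_1)$ via Danskin), the same drift-coupled SGD recursion with induction over outer iterations using the normalized step and the $\gO(\kappa)$-Lipschitzness of $\vy_{j\nu}^*$ in $\vx$, and the same final assembly $pT(S+K)=\gO(pTK)$. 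The only cosmetic difference is that you derive the normalized-gradient-descent inequality directly rather than invoking the cited NSGD lemma, which is equivalent content.
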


The above theorem shows that the F${}^2$SA-$p$ method can achieve the $\tilde \gO( p \kappa^{9+2/p} \epsilon^{-4-2/p} \log (\nicefrac{\kappa}{\epsilon}))$ SFO complexity for $p$th-order smooth bilevel problems. In the following, we give several remarks on the complexity in different regions of $p$.
\begin{rmk}[First-order smooth region]
For $p=1$, our upper bound becomes $\tilde \gO(\kappa^{11} \epsilon^{-6})$, which improves the  $\tilde \gO(\kappa^{12} \epsilon^{-6})$ bound in \citep{chen2023near} by a factor of $\kappa$. The improvement comes from a tighter analysis in the lower-level SGD update and a careful parameter setting.
\end{rmk}

\begin{rmk}[Highly-smooth region] \label{rmk:high-p-compl}
For $p = \Omega( \log (\nicefrac{\kappa}{\epsilon}) / \log \log (\nicefrac{\kappa}{\epsilon}) )$ in Definition \ref{dfn:highly-smooth-blo},
we can run F${}^2$SA-$q$ with $q \asymp \log (\nicefrac{\kappa}{\epsilon}) / \log \log (\nicefrac{\kappa}{\epsilon}) $ and the $\gO(q \kappa^9 \epsilon^{-4} (\nicefrac{\kappa}{\epsilon})^{2/q} \log(\nicefrac{\kappa}{\epsilon}))$ complexity in Theorem \ref{thm:upper-pth} simplifies to $\gO( \kappa^9 \epsilon^{-4} \log^3(\nicefrac{\kappa}{\epsilon}) / \log\log (\nicefrac{\kappa}{\epsilon})) = \tilde \gO(\kappa^9 \epsilon^{-4})$, which matches the best-known complexity for HVP-based methods \citep{ji2021bilevel} under stochastic Hessian assumption (\ref{eq:asm-stocBio}). 
\end{rmk}

In the upcoming section, we will derive an $\Omega(\epsilon^{-4})$ lower bound to prove that the F$^2$SA-$p$ is near-optimal in the above highly-smooth region if the condition number $\kappa$ is a constant. We leave the study of optimal complexity for non-constant $\kappa$ to future work. 

\paragraph{Comparison of results for odd $p$ and even $p$.}
Note that by Lemma \ref{lem:error-pth} when $p$ is odd, we need to use $p+1$ points to construct the estimator, which means the algorithm needs to solve $p+1$ lower-level problems in each iteration to achieve an $\gO(\nu^p)$ error guarantee.  In contrast, when $p$ is even, $p$ points are enough since the $p$th-order central difference estimator satisfies that $\alpha_0 = 0$. It suggests that even when $p$ is odd, the algorithm designed for odd $p$ may still be better. For instance, the F${}^2$SA-2 may always be a better choice than F${}^2$SA since its benefits \textit{almost come for free}: (1) it still only needs to solve 2 lower-level problems as the F${}^2$SA method, which means the per-iteration complexity remains the same. (2) Although the improved complexity of F${}^2$SA-2 relies on the second-order smooth condition, without such a condition, its error guarantee in hyper-gradient estimation will only degenerate to a first-order one, which means it is at least as good as F${}^2$SA.

\section{An $\Omega(\epsilon^{-4})$ Lower Bound}

In this section, we prove an $\Omega(\epsilon^{-4})$ lower bound for stochastic bilevel optimization via a reduction to single-level optimization.
Our lower bound holds for any randomized algorithms $\texttt{A}$, which can be defined as a sequence of measurable mappings $\{ \texttt{A}_t \}_{t=1}^T$ that is defined recursively by
\begin{align} \label{eq:dfn-alg}
    (\vx_{t+1},\vy_{t+1}) = \texttt{A}_t \left(r, F(\vx_0,\vy_0), G(\vx_0,\vy_0)), \cdots, F(\vx_t,\vy_t), G(\vx_t,\vy_t))\right ), ~~ t \in \sN_+,
\end{align}
where $r$ is a random seed drawn at the beginning to produce the queries, and $F,G$ are the stochastic gradient estimators that satisfy Assumption \ref{asm:stoc}.
Without loss of generality, we assume that $(\vx_0,\vy_0) = (\vzero,\vzero)$. Otherwise, we can prove the same lower bound by shifting the functions.

\paragraph{The construction.}
We construct a separable bilevel instance such that the upper-level function $f(\vx,\vy) \equiv f_\mU(\vx)$ and its stochastic gradient align with the hard instance in \citep{arjevani2023lower}, while the lower-level function is the simple quadratic $g(\vx,y) \equiv g(y) = \mu y^2 /2$ with deterministic gradients. We defer the concrete construction to Appendix \ref{apx:proof-lower}.
For this separable bilevel instance,
we can show that for any randomized algorithm defined in Eq. (\ref{eq:dfn-alg}) that uses oracles $(F_\mU, G)$, the progress in $\vx$ can be simulated by another randomized algorithm that only uses $F_\mU$, meaning that the single-level lower bound \citep{arjevani2023lower} also holds. 

\begin{thm}[Lower bound] \label{thm:lower}
There exist numerical constants $c>0$ such that for all $\Delta>0$, $L_1, \cdots,L_{p+1} >0$ and $\epsilon \le c \sqrt{L_1 \Delta}$,
there exists
a distribution over the function class $\gF^{\text{nc-sc}}(L_0,\cdots,L_{p+1},\mu,\Delta)$ and the stochastic gradient estimators satisfying Assumption \ref{asm:stoc}, such that
any randomized algorithm $\texttt{A}$ defined as Eq. (\ref{eq:dfn-alg}) can not find an $\epsilon$-stationary point of $\varphi(\vx) = f(\vx,\vy^*(\vx))$ in less than $\Omega(\Delta L_1 \sigma^2 \epsilon^{-4})$ SFO calls.
\end{thm}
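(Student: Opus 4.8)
The plan is to reduce the stochastic bilevel lower bound to the known single-level SGD lower bound of \citet{arjevani2023lower} by exhibiting a fully separable hard instance. First I would fix the lower-level function to be the decoupled quadratic $g(\vx,\vy) \equiv g(\vy) = \tfrac{\mu}{2}\Vert \vy \Vert^2$, with its exact (deterministic) gradient as the oracle; this is trivially $\mu$-strongly convex in $\vy$, has all higher-order derivatives beyond the second equal to zero (so every $L_q$-Lipschitz condition in Assumptions \ref{asm:prior} and \ref{asm:addition} on $g$ is met for free), and has the closed form $\vy^*(\vx) = \vzero$ independent of $\vx$. Then I would set the upper-level function $f(\vx,\vy) \equiv f_\mU(\vx)$ to be (a scaled, rotated copy of) the Arjevani--Carmon et al.\ hard function, which depends only on $\vx$, together with its zero-respecting stochastic gradient estimator $F_\mU$ satisfying Assumption \ref{asm:stoc} with variance $\sigma^2$. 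Since $f$ does not depend on $\vy$, we have $\varphi(\vx) = f(\vx,\vy^*(\vx)) = f_\mU(\vx)$ exactly, so $\nabla\varphi(\vx) = \nabla f_\mU(\vx)$ and an $\epsilon$-stationary point of $\varphi$ is exactly an $\epsilon$-stationary point of $f_\mU$; the Lipschitz conditions on $f$ reduce to those of $f_\mU$, and the initial gap $\varphi(\vzero) - \inf\varphi = f_\mU(\vzero) - \inf f_\mU \le \Delta$ is inherited from the single-level construction. One should also check that $f(\vx,\vy)$ is $L_0$-Lipschitz in $\vy$ — here trivially with $L_0 = 0$, or one can take any $L_0>0$ — and that the cross derivatives $\nabla^2_{xy} g = \vzero$, so the hyper-gradient formula \eqref{eq:hyper-grad} collapses to $\nabla f_\mU$ as claimed.

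The heart of the argument is the simulation step: any randomized algorithm $\texttt{A}$ of the form \eqref{eq:dfn-alg} that queries the oracles $(F_\mU, G)$ at iterates $(\vx_t,\vy_t)$ can have its $\vx$-progress reproduced by an algorithm $\texttt{A}'$ that queries only $F_\mU$. The point is that the lower-level oracle $G(\vx_t,\vy_t) = \nabla g(\vy_t) = \mu \vy_t$ is a \emph{deterministic, known} function of the current iterate $\vy_t$, which $\texttt{A}$ itself computed from past responses; hence $\texttt{A}'$ can internally synthesize every $G$-response without any oracle call, feed it into the same measurable maps $\texttt{A}_t$, and thereby generate the identical sequence of $\vx$-iterates while spending all of its budget on $F_\mU$. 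Therefore, if $\texttt{A}$ finds an $\epsilon$-stationary point of $\varphi = f_\mU$ in $N$ SFO calls, so does $\texttt{A}'$ using at most $N$ calls to $F_\mU$, and the single-level lower bound of \citet{arjevani2023lower} — which asserts $N = \Omega(\Delta L_1 \sigma^2 \epsilon^{-4})$ for $\epsilon \le c\sqrt{L_1\Delta}$ — immediately applies.

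I expect the main obstacle to be bookkeeping rather than conceptual depth: one must verify that the Arjevani et al.\ construction (which is built with a specific smoothness constant, variance, and dimension that grows with the desired accuracy) can be instantiated so that \emph{all} of the constants $L_1, \dots, L_{p+1}$, $\mu$, $\Delta$, $\sigma$ in Definition \ref{dfn:highly-smooth-blo} come out as prescribed — in particular that their $f_\mU$ indeed has Lipschitz derivatives up to the order required by Assumption \ref{asm:addition} restricted to $\vx$ (their hard instance is $C^\infty$ with bounded derivatives of every order after appropriate scaling, so this holds, but it needs to be stated), and that the probabilistic/zero-respecting structure of their oracle is preserved under the trivial embedding into $(\vx,\vy)$-space. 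A second minor subtlety is that the theorem quantifies over a \emph{distribution} over the function class and over oracles, so the reduction must be phrased at the level of distributions: one couples the bilevel instance to the single-level one by taking the same random draw of $f_\mU$ and pushing $g$ forward deterministically, and then the expected number of oracle calls needed by any algorithm transfers directly. Once these checks are in place, the bound $\Omega(\Delta L_1 \sigma^2 \epsilon^{-4})$ follows verbatim from the cited single-level result.
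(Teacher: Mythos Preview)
Your proposal is correct and follows essentially the same route as the paper: the fully separable construction $f(\vx,\vy)=f_\mU(\vx)$, $g(\vy)=\tfrac{\mu}{2}\Vert\vy\Vert^2$, the observation that $G$ is deterministic so any bilevel algorithm's $\vx$-trajectory can be simulated by a single-level algorithm querying only $F_\mU$, and the direct invocation of the Arjevani et al.\ lower bound. One small over-caution: you worry about verifying that $f_\mU$ has Lipschitz high-order derivatives ``restricted to $\vx$'', but Assumption~\ref{asm:addition} only concerns derivatives in $\vy$, and since $f$ is independent of $\vy$ those are all identically zero --- so this check is trivial, not a genuine bookkeeping burden.
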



Below, we give a detailed discussion on the constructions in related works.
\paragraph{Comparison to other bilevel lower bounds.}
\citet{dagreou2024lower} proved lower bounds for finite-sum bilevel optimization via a similar reduction to single-level optimization.
However, the direct extension of 
their construction in the fully stochastic setting gives $f(\vx,\vy) = f_\mU(\vy)$ and $g(\vx,\vy) = (\vx- \vy)^2$, where the high-order derivatives of $f(\vx,\vy)$ not $\gO(1)$-Lipschitz in $\vy$ and thus violates our assumptions.
\citet{kwon2024complexity} also proved an $\Omega(\epsilon^{-4})$ lower bound for stochastic bilevel optimization. However, their construction $f(\vx,y) = y$ and $g(\vx,y) = (f_\mU(\vx) - y)^2$ violate the first-order smoothness of $g(\vx,y)$ in $\vx$ when $y$ is far way from $f_\mU(\vx)$. 
In this work, we use a fully separable construction to avoid all the aforementioned issues in other works.

\section{Experiments}

In this section, we conduct numerical experiments to verify our theory.
Following \citep{grazzi2020iteration,ji2021bilevel}, we consider the ``learn-to-regularize'' problem of logistic regression (Example~\ref{exmp:l2reg}) on the ``20 Newsgroup'' dataset, which provably satisfies the highly smooth assumption of any order.
The dataset contains 18,000 samples, each sample consists of a feature vector in dimensional $130,107$ vector and a label that takes a value in $\{ 1,\cdots,20\}$. 
{We compare our proposed method F${}^2$SA-$p$ with $p \in \{ 2,3,5,8,10\}$} with both the previous best fully first-order method F${}^2$SA \citep{kwon2023fully,chen2023near} and other Hessian-vector-product-based methods stocBiO \citep{ji2021bilevel}, MRBO and VRBO \citep{yang2021provably}. We also include a baseline ``w/o Reg'' that means the training result of SGD without tuning any regularization. 
For all the algorithms, we 
search the other hyperparameters (including $\eta_x, \eta_y, \nu $) in a logarithmic scale with base $10$.
We run the algorithms with $K=10$ iterations in the inner loop, and $T = 1000$ iterations in the outer loop, and report the test loss/accuracy \textit{v.s.} the number of outer-loop iterations $t$ in Figure~\ref{fig:l2reg}.
{To demonstrate the potential of our methods on nonsmooth nonconvex problems, we also 
provide additional experiments on a 5-layer multilayer perceptron (MLP) network with ReLU activation in Appendix~\ref{apx:add-exp}.} The codes to reproduce our experiments are available online \footnote{https://github.com/TrueNobility303/F2BA}.






\begin{figure}[htbp]
    \centering
    \includegraphics[scale= 0.25]{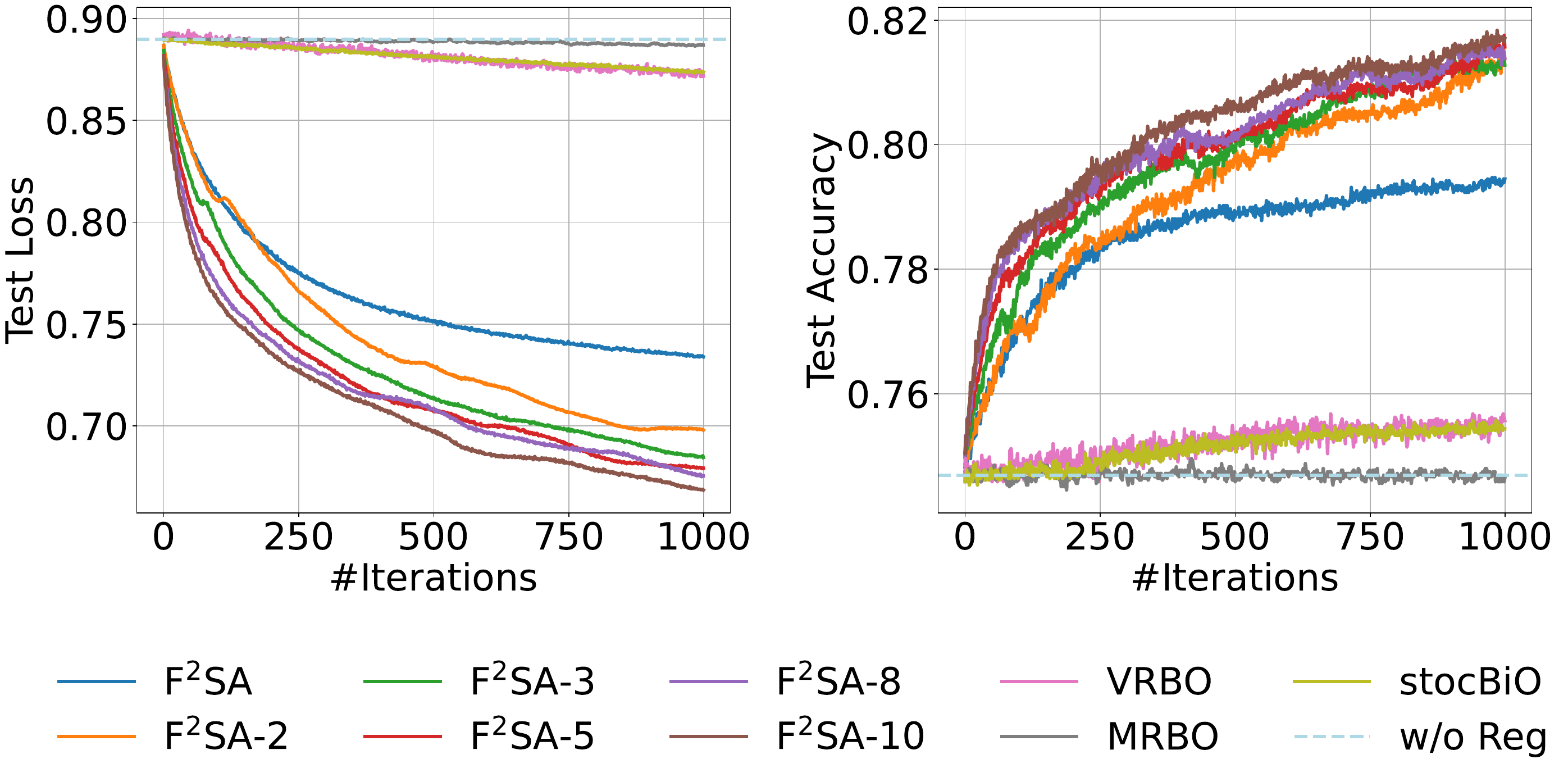}
\caption{Performances of different algorithms on Example \ref{exmp:l2reg}.}
\label{fig:l2reg}
\end{figure}

\section{Conclusions and Future Works}
This paper proposes a class of fully first-order method F${}^2$SA-$p$ that achieves the $\tilde \gO(p \epsilon^{-4-2/p})$ SFO complexity for $p$th-order smooth bilevel problems. Our result generalized the best-known $\tilde \gO(\eps^{-6})$ result  
\citep{kwon2024complexity,chen2023near} from $p=1$ to any $p \in \sN_+$.
We also complement our result with an $\Omega(\epsilon^{-4})$ lower bound to show that  our method is near-optimal when $p = \Omega(\log \epsilon^{-1} / \log \log \epsilon^{-1})$.
Nevertheless, a gap still exists when $p$ is small, and how to fill it even for the basic setting $p=1$ is an open problem.
Another open problem is whether our theory can be extended our theory to structured nonconvex-nonconvex bilevel problems studied by many recent works \citep{kwon2024penalty,chen2024finding,chen2025set,jiang2025beyond,xiao2023alternating,xiao2025unlocking}. {In addition, it will also be interesting to further improve the convergence rate of our methods by combining them with variance-reduction \citep{fang2018spider,cutkosky2019momentum} or momentum techniques \citep{fang2019sharp,cutkosky2020momentum}.}

\subsubsection*{Acknowledgments}
Lesi Chen thanks Jeongyeol Kwon for helpful discussions and Sanyou Mei for pointing out related references. Jingzhao Zhang is supported by Shanghai Qi Zhi Institute Innovation Program, Tsinghua Dushi Funds, and Xiongan AI Institute.


\bibliography{iclr2026_conference}
\bibliographystyle{iclr2026_conference}

\newpage
\appendix

\section{Notations for Tensors} \label{apx:notation-tensor}

We follow the notation of tensors used by \citet{kolda2009tensor}. For two $p$-way tensors $\gX \in \sR^{n_1 \times n_2 \times \cdots \times n_p}$ and $ \gY \in \sR^{n_1 \times n_2 \times \cdots \times n_p}$, their inner product $ z = \langle \gX, \gY \rangle  $ is defined as
\begin{align*}
\langle \gX, \gY \rangle = \sum_{i_1 =1}^{n_1} \sum_{i_2 = 1}^{n_2} \cdots \sum_{i_q = 1}^{n_p} \gX_{i_1,i_2,\cdots,i_p} \gY_{i_1,i_2,\cdots,i_p}.
\end{align*}
For two tensors $\gX \in \sR^{n_1 \times n_2 \times \cdots n_p}$ and $\gY \in \sR^{m_1 \times m_2 \cdots \times m_q}$,
their outer product $\gZ = \gX \otimes \gY$ is a tensor $\gZ \in \sR^{n_1 \times n_2 \times \cdots \times n_p \times m_1 \times m_2 \times \cdots \times m_q}$ whose elements are defined as
\begin{align*}
   \left( \gX \otimes \gY \right)_{i_1,i_2,\cdots,i_p,j_1,j_2,\cdots,j_q} = \gX_{i_1,i_2,\cdots,i_p} \gY_{i_1,i_2,\cdots,i_p}.
\end{align*}
The operator norm of a tensor $\gX \in \sR^{n_1 \times n_2 \times \cdots \times n_p}$ is defined as
\begin{align*}
    \Vert \gX \Vert  = \sup_{ \Vert \vu_i \Vert = 1, i=1,\cdots,p } \langle \gX, \vu_1 \otimes \vu_2 \otimes \cdots \otimes \vu_p \rangle.
\end{align*}
Equipped with the notion of norm, we say a mapping $\gT: \sR \rightarrow \sR^{n_1 \times n_2 \times \cdots \times n_p}$ is $D$-bounded if
\begin{align*}
    \Vert \gT(\vx) \Vert \le D, \quad \forall \vx \in \sR.
\end{align*}
We say $\gT$ is $C$-Lipschitz continuous if 
\begin{align*}
    \Vert \gT(\vx) - \gT(\vy) \Vert \le C \Vert \vx - \vy \Vert, \quad \forall \vx,\vy \in \sR.
\end{align*}

\section{Proof of Lemma \ref{lem:error-pth}} \label{apx:proof-err-pth}
 
\begin{proof}
If $\psi^{(p)}(\nu)$ is $C$-Lipschitz continuous in $\nu$, then by Taylor’s theorem  we have
\begin{align} \label{eq:pth-Taylor}
    \psi(\nu) = \psi(0) + \sum_{k=1}^p \frac{(j \nu)^k}{k!} \psi^{(k)}(0) + \gO \left( C \nu^{p+1} \right).
\end{align}
Now, we analyze the case when $p$ is even or odd separately.
\paragraph{If $p$ is even.} 
The estimator we use is known as the $p$th-order central difference, whose coefficients are known \citep{khan2003mathematical}.
Let $n = p/2$.
We select coefficients $\{\alpha_j \}_{j=-n}^{n}$ such that
\begin{align*}
    \alpha_j = -\alpha_{-j}, \quad \forall j = 0,1,\cdots,n. 
\end{align*}
Then, summing up Eq. (\ref{eq:pth-Taylor}) with coefficients $\alpha_j$ gives 
\begin{align*}
    \frac{1}{\nu} \sum_{j=-n}^{j=n} \alpha_j \psi(j \nu) = \underbrace{2 \sum_{j=1}^{n} \alpha_j \sum_{k=1,3,\cdots}^{n-1} \frac{j^k \nu^{k-1}}{k!} \psi^{(k)}(0)}_{(*)} + \gO \left( C \nu^p \right).
\end{align*}
To let term (*) be equivalent to $\psi'(0)$, we let $\{ \alpha_j\}_{j=1}^{n}$ satisfy the following equations:
\begin{align*}
    2 \sum_{j=1}^{n} \alpha_j j^{k}  = \vone_{k=1}, \quad \forall k = 1,3,\cdots,n-1,
\end{align*}
which is equivalent to let $\{ j \alpha_j\}_{j=1}^{n}$ satisfy the following linear equation
\begin{align*}
    \begin{pmatrix}
    1 & 1 & 1 & \cdots & 1 \\
    1^2 & 2^2 & 3^2 & \cdots & n^{2} \\
    1^4 & 2^4 & 3^4 & \cdots & n^{4} \\
    \vdots & \vdots & \vdots & \ddots & \vdots \\
    1^{2(n-1)} & 2^{2(n-1)} & 3^{2(n-1)} & \cdots & n^{2(n-1)}
    \end{pmatrix}
    \begin{pmatrix}
    \alpha_1 \\
    2\alpha_2 \\
    3\alpha_3 \\
    \vdots \\
    n\alpha_{n}
    \end{pmatrix}
    = 
    \begin{pmatrix}
    1/2 \\
    0 \\
    0 \\
    \vdots \\
    0
    \end{pmatrix}.
\end{align*}
Now we solve this linear equation to determine the values of $\{\alpha_j\}_{j=1}^{n}$. Let $\mA$ be the coefficient matrix of this linear equation, and let $\mA_j$ be the matrix such that the $j$th column of $\mA$ is replaced by the standard unit vector $(1, 0, \cdots, 0)^\top$. By Cramer's rule, we have
\begin{align*}
    2 j \alpha_j = \frac{\det (\mA_j)}{\det(\mA)}, \quad j = 1,\cdots,n.
\end{align*}
By observation, we can find that both $\mA$ and $\mA_j$ are Vandermonde matrices. Therefore, we can explicitly calculate both $\det(\mA)$ and $\det (\mA_j)$ according to the determinant formula of  
Vandermonde matrices, which leads to
\begin{align*}
    2 j \alpha_j =& \frac{(-1)^{j-1}\cdot ((j-1)!)^2~ \cdot (n!)^2 \cdot j! \cdot (2j)!}{ (j!)^2 \cdot (j-1)! \cdot (n-j)! \cdot (2j-1)! \cdot (n+j)!} = \frac{2 (-1)^{j-1}  (n!)^2}{(n-j)! \cdot (n+j)!}.
\end{align*} 
Therefore, we have 
\begin{align*}
    \alpha_j = \frac{(-1)^{j-1}  (n!)^2}{j \cdot (n-j)! \cdot (n+j)!},
\end{align*}
from which it is clear that $\vert \alpha_j \vert \le 1/j$.
\paragraph{If $p$ is odd.} {Instead of using the known $p$th-order forward difference \citep{khan2003mathematical} for which we find that the coefficients will be exponentially large in $p$, we motivate from the $p$th-order central difference above to obtain a stable estimator by leveraging negative points.}
Let $n = (p+1)/2$. We select 
coefficients $\{\alpha_j \}_{j=1-n}^{n}$ that satisfy the constraint $\sum_{j=1-n}^{n} \alpha_j = 0$. Then, 
summing up Eq.~(\ref{eq:pth-Taylor}) with coefficients $\alpha_j$ gives
\begin{align*}
    \frac{1}{\nu} \sum_{j=1-n}^{j=n} \alpha_j \psi(j \nu) = \underbrace{\sum_{j=1-n}^{j=n} \alpha_j \sum_{k=1}^{p} \frac{j^k \nu^{k-1}}{k!} \psi^{(k)}(0)}_{(*)} + \gO \left( C \nu^p \right).
\end{align*}
To let term (*) be equivalent to $\psi'(0)$, we let $\{ \alpha_j\}_{j=1-n}^{n}$ satisfy the following equations:
\begin{align*}
    \sum_{j=1-n}^{n} \alpha_j j^{k}  = \vone_{k=1}, \quad \forall k = 1,2,\cdots,p,
\end{align*}
which is equivalent to let $\{\hat \alpha_j \}_{j =(1-n), j \ne 0}^n$ satisfy the following linear equation
\begin{align*}
    \begin{pmatrix}
    1 & 1   &  \cdots  & 1 
    &  1 & \cdots & 1 \\
    1-n & 2-n   & \cdots & -1  & 1 & \cdots & n \\
    \vdots &  \vdots  & \vdots & \ddots & \vdots & \vdots & \ddots & \vdots \\
   (1-n)^{2n+1} & (2-n)^{2n+1}  & \cdots & (-1)^{2n+1}  & 1^{2n+1} & \cdots  & n^{2n+1}
    \end{pmatrix}
    \begin{pmatrix}
    \hat \alpha_{1-n} \\
    \hat \alpha_{2-n} \\
    \vdots \\
    \hat \alpha_n
    \end{pmatrix}
    = 
    \begin{pmatrix}
    1 \\
    0 \\
    \vdots \\
    0
    \end{pmatrix},
\end{align*}
where we denote $\hat \alpha_j = j \alpha_j$ for $j = 1-n,\cdots,-1$ and $1,\cdots,n$.
Now we solve this linear equation to determine the values of $\{\hat \alpha_j\}$. Let $\mA$ be the coefficient matrix of this linear equation, and let $\mA_j$ be the matrix such that the $j$th column of $\mA$ is replaced by the standard unit vector $(1, 0, \cdots, 0)^\top$. By Cramer's rule, we have
\begin{align*}
    \hat \alpha_j = \frac{\det (\mA_j)}{\det(\mA)}, \quad j = 1,\cdots,n.
\end{align*}
Similar to the case of even $p$, both $\mA$ and $\mA_j$ are Vandermonde matrices. Therefore, we can explicitly calculate both $\det(\mA)$ and $\det (\mA_j)$ according to the determinant formula of  
Vandermonde matrices. Then, for $j = 1, \cdots, n$, we can obtain that
\begin{align*}
    \alpha_j = \frac{\hat \alpha_j}{j} = \frac{(-1)^{j-1} (j-1)! (n-1)!  n! j!}{ j \cdot j! (j-1)! (n+j -1)! (n-j)!} = \frac{(-1)^{j-1} (n-1)! n!}{j (n+j-1)! (n-j)!}.
\end{align*}
Similarly, for $j = 1, \cdots, n-1$, we can obtain that
\begin{align*}
    \alpha_{-j} = \frac{\hat \alpha_{-j}}{-j} =  \frac{(-1)^{j} (n-1)! (j-1)! n!  j! }{j \cdot j ! (n+j-1)! (j-1)! (n+j)!} = \frac{(-1)^{j} (n-1)!  n!   }{j  (n-j-1)! (n+j)!}.
\end{align*}
Therefore, it is easy to see that $\vert \alpha_j \vert \le 1/j$ for $j = 1,\cdots, n$, and $\vert \alpha_{-j} \vert \le 1/j$ for $j = 1,\cdots,n-1$. Finally, we calculate $\alpha_0$ from the constraint $ \sum_{j=1-n}^n \alpha_j =0 $, which leads to 
\begin{align} \label{eq:alpha-0-odd}
    \alpha_0 = - \underbrace{\sum_{j=1}^n \frac{(-1)^{j-1} (n-1)! n!}{j (n+j-1)! (n-j)!}}_{S_1} - \underbrace{\sum_{j=1}^{n-1} \frac{(-1)^{j} (n-1)!  n!   }{j  (n-j-1)! (n+j)!}}_{S_2}.
\end{align}
We claim that $\alpha_0 = -1/n$ and hence $\vert \alpha_0 \vert \le 1$. We prove our claim by calculating the values of $S_1$ and $S_2$ to obtain $\alpha_0$.  For $S_1$, we have
\begin{align*}
    S_1 = \sum_{j=1}^n  (-1)^{j-1} \binom{n}{j}\frac{(n-1)! (j-1)!}{(n+j-1)!}.
\end{align*}
The fraction on the right is the Beta function $B(j,n)$, which can be represented as the integral $ 
B(j,n)= \int_0^1 x^{j-1} (1-x)^{n-1} {\rm d} x$. Therefore,
\begin{align*}
    S_1 =& \sum_{j=1}^n  (-1)^{j-1} \binom{n}{j} \int_0^1 x^{j-1} (1-x)^{n-1} {\rm d} x \\
    = & \int_0^1 (1-x)^{n-1} \left ( \sum_{j=1}^n (-1)^{j-1} \binom{n}{j} x^{j-1}  \right) {\rm d}x \\
    =& \int_0^1 \frac{(1-x)^{n-1}}{x} \left ( \sum_{j=1}^n (-1)^{j-1} \binom{n}{j} x^{j}  \right) {\rm d}x.
\end{align*}
Substituting the binomial expansion $(1-x)^n  = 1+ \sum_{j=0}^n \binom{n}{j} (-x)^j $, we then  have
\begin{align*}
     S_1 = \int_0^1 \frac{(1-x)^{n-1}}{x} \left( 1 - (1-x)^n \right) {\rm d} x.
\end{align*}
Let $y = 1-x$. We then have
\begin{align*}
    S_1 = \int_0^1 \frac{y^{n-1}}{1-y} (1- y^n) {\rm d} y.
\end{align*}
Substituting the geometric series sum $ \frac{1-y^n}{1-y} = \sum_{k=0}^{n-1} y^k  $, we then have
\begin{align*}
    S_1 =  \int_0^1  y^{n-1} \left(  \sum_{k=0}^{n-1} y^k \right) {\rm d} y = \int_0^1 \sum_{k=0}^{n-1} y^{n+k-1} {\rm d}y = \sum_{k=0}^{n-1}  \int_0^1 y^{n+k-1} {\rm d}y = \sum_{k=0}^{n-1} \frac{1}{n+k}.
\end{align*}
Following similar steps, we can also obtain that
\begin{align*}
    S_2 = -\sum_{k=0}^{n-2} \frac{1}{n+k+1}.
\end{align*}
Now, for $\alpha_0 = -S_1 - S_2$, the summation terms cancel out perfectly, which leads to $\alpha_0 = -1/n$.

\end{proof}

\section{Proof of Lemma \ref{lem:solu-Lip-pth}} \label{apx:proof-lem-solu-pth}

The proof relies on the high-dimensional version of the Faà di Bruno formula. To formally state the result, we define the following notions. For a mapping $\gT: \sR^m \rightarrow \sR^{n_1 \times \cdots \times n_q}$, we define its $k$th-order directional derivative evaluated at $\vz \in \sR^m$ along the direction $(\vu_1,\cdots,\vu_k)$ as   
\begin{align*}
    \nabla^k_{\vu_1,\cdots,\vu_k} \gT_{\mid \vz}= \nabla^k \gT_{\mid \vz} (\vu_1,\cdots,\vu_k).
\end{align*}
We let the symmetric products of $\vu_1,\cdots,\vu_k$ as 
\begin{align*}
    \vu_1 \lor \vu_2 \lor \cdots \lor \vu_k = \frac{1}{k!} \sum_{\pi \in {\rm Perm}(k)} \vu_{\pi(1)} \otimes \vu_{\pi(2)} \otimes \cdots \otimes \vu_{\pi(k)},
\end{align*}
where ${\rm Perm(k)}$ denotes the set of permutations of $\{1,2,\cdots,k \}$. Also, we define the set of all (unordered) partitions of a set $A$ into $k$ pairwise disjoint non-empty sets as
\begin{align*}
    \gP(A,k) = \left\{ \mP = (P_1,\cdots,P_k) \subseteq \gB(A) \mid A = \cup_{j=1}^k P_j;~ \emptyset \notin \mP; ~ P_i \cap P_j = \emptyset, \forall i<j  \right\},
\end{align*}
where $\gB(A)$ is the power set of $A$, \textit{i.e.}, the set of all subsets of $A$. We also abbreviate $\gP(\{ 1:q\},k)$ as $\gP(q,k)$.
Using the above notions, we have the following result.
\begin{lem}[{\citep[Proposition 3.1]{licht2024higher}}] \label{lem:faa-di-bruno}
Let $\gT_1$ and $\gT_2$ be two mappings. If $\gT_1$ and $\gT_2$ are $k$-times differentiable at the point $\vz$ and $\gT_1(\vz)$, respectively, then the composite mapping $\gT_2 \circ \gT_1$ is $k$-times differentiable at the point $\vz$ and we have 
\begin{align*}
    \nabla^q (\gT_2 & \circ \gT_1)_{\mid \vz} (\lor_{i=1}^q  \vu_i) = \sum_{\substack{1 \le k \le q,\\
    \mP \in \gP(q,k)}
    } \nabla^k {\gT_2}_{\mid \gT_1(\vz)} \left(\nabla^{\vert P_1 \vert} {\gT_1}_{\mid \vz}(\lor_{i \in P_1} \vu_i), \cdots \nabla^{\vert P_k \vert} {\gT_1}_{\mid \vz}(\lor_{i \in P_k} \vu_i)   \right).
\end{align*}
\end{lem}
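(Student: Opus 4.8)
The final statement to be proved is Lemma \ref{lem:faa-di-bruno} itself, the high-dimensional Faà di Bruno formula attributed to \citet[Proposition 3.1]{licht2024higher}. My plan is to prove it by induction on the order of differentiation $q$, using the multilinearity and symmetry of higher-order derivatives together with a combinatorial bookkeeping of set partitions. The base case $q=1$ is just the ordinary chain rule: $\nabla(\gT_2 \circ \gT_1)_{\mid \vz}(\vu_1) = \nabla {\gT_2}_{\mid \gT_1(\vz)}(\nabla {\gT_1}_{\mid \vz}(\vu_1))$, which matches the right-hand side since $\gP(1,1)$ contains only the trivial partition $(\{1\})$ and the sum over $k$ collapses to $k=1$.

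For the inductive step, I would assume the formula holds for order $q$ and differentiate both sides once more along a new direction $\vu_{q+1}$. On the left this produces $\nabla^{q+1}(\gT_2 \circ \gT_1)_{\mid \vz}(\lor_{i=1}^{q+1} \vu_i)$ by definition of the symmetric higher-order directional derivative. On the right, I would apply the product rule to each summand $\nabla^k {\gT_2}_{\mid \gT_1(\vz)}(\nabla^{\vert P_1 \vert}{\gT_1}_{\mid \vz}(\lor_{i \in P_1}\vu_i), \dots, \nabla^{\vert P_k\vert}{\gT_1}_{\mid \vz}(\lor_{i \in P_k}\vu_i))$. Differentiating such a composite term yields two families of contributions: either the derivative lands on the outer factor $\nabla^k {\gT_2}$ (raising it to $\nabla^{k+1}{\gT_2}$ and, via the chain rule, introducing a new singleton block $\{q+1\}$ carrying $\nabla{\gT_1}(\vu_{q+1})$), or it lands on one of the inner factors $\nabla^{\vert P_j\vert}{\gT_1}$ (raising it to $\nabla^{\vert P_j\vert+1}{\gT_1}$ and inserting $q+1$ into the block $P_j$). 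The crux is then to verify that these two mechanisms exactly reproduce the sum over all partitions $\gP(q+1,k')$ of $\{1,\dots,q+1\}$: every partition of $\{1,\dots,q+1\}$ arises in precisely one way, since the element $q+1$ either forms its own singleton block (first mechanism, $k'=k+1$) or joins one of the existing blocks of a partition of $\{1,\dots,q\}$ (second mechanism, $k'=k$).

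The main obstacle, and the step I would treat most carefully, is this combinatorial matching: I must show the induction produces each partition of $\{1,\dots,q+1\}$ with coefficient exactly one, with no overcounting and no missing partitions. The cleanest way is to argue that the map sending a partition $\mP'$ of $\{1,\dots,q+1\}$ to the pair $(\mP, \text{location of } q+1)$ — where $\mP$ is obtained by deleting $q+1$ (and dropping the block if it becomes empty) — is a bijection onto the disjoint union of ``append a singleton'' and ``insert into an existing block'' operations applied to partitions of $\{1,\dots,q\}$. A subtlety requiring attention is the interaction with the symmetric products $\lor$: because the directions $\vu_1,\dots,\vu_{q+1}$ enter symmetrically, I should confirm that differentiating the symmetrized expression correctly distributes $\vu_{q+1}$ across all blocks, which follows from the symmetry of each $\nabla^m {\gT_1}$ and the definition of the symmetric tensor product. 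I would also note the regularity hypotheses ($\gT_1$ being $q$-times differentiable at $\vz$ and $\gT_2$ at $\gT_1(\vz)$) guarantee all the derivatives invoked in the induction exist, so each differentiation step is justified. Once the bijection is established, collecting terms gives precisely the claimed formula at order $q+1$, completing the induction.
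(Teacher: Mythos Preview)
The paper does not prove this lemma at all: it is simply quoted as \citep[Proposition 3.1]{licht2024higher} and used as a black box in the proof of Lemma~\ref{lem:solu-Lip-pth}. Your inductive argument---base case the ordinary chain rule, inductive step via the product rule producing either a new singleton block $\{q+1\}$ or an insertion of $q+1$ into an existing block, followed by the bijection between partitions of $\{1,\dots,q+1\}$ and ``delete $q+1$'' applied to partitions of $\{1,\dots,q\}$---is the standard and correct proof of the higher-order Faà di Bruno formula, so you are supplying a self-contained argument where the paper merely cites one.
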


Recall Danskin's theorem that $\frac{\partial}{\partial \vx} \ell_\nu(\vx) = \frac{\partial}{\partial \vx} g_\nu(\vx, \vy_\nu^*(\vx))$. We can apply Lemma \ref{lem:faa-di-bruno} with $\gT_1 = \vy_\nu^*(\vx)$ and $\gT_1 = \frac{\partial}{\partial \vx} g_\nu(\vx,\vy)$ to obtain that
\begin{align} \label{eq:high-order-x}
    \frac{\partial^{q+1}}{\partial \nu^q \partial \vx}  \ell_\nu(\vx)  &=  
    \sum_{\substack{1 \le k \le q,\\
    \mP \in \gP(q,k)}
    } \frac{\partial^{k+1}}{\partial \vy^k \partial \vx}g_\nu(\vx,\vy_\nu^*(\vx)) \left(
    \frac{\partial^{\vert P_1 \vert}}{\partial \nu^{\vert P_1 \vert}}
    \vy_\nu^*(\vx), \cdots, \frac{\partial^{\vert P_k \vert}}{\partial \nu^{\vert P_k \vert}}
    \vy_\nu^*(\vx)   \right).
\end{align}
Symmetrically, using the  first-order optimality condition $\frac{\partial}{\partial \vy} g_\nu(\vx,\vy_\nu^*(\vx)) = 0 $ and 
where the first identity uses the 
Lemma \ref{lem:faa-di-bruno} with $\gT_1 = \vy_\nu^*(\vx)$ and $\gT_1 = \frac{\partial}{\partial \vy} g_\nu(\vx,\vy)$ yields that
\begin{align}  \label{eq:high-order-y} 0  &=  
    \sum_{\substack{1 \le k \le q,\\
    \mP \in \gP(q,k)}
    } \frac{\partial^{k+1}}{\partial \vy^{k+1}} g_\nu(\vx,\vy_\nu^*(\vx)) \left(
    \frac{\partial^{\vert P_1 \vert}}{\partial \nu^{\vert P_1 \vert}}
    \vy_\nu^*(\vx), \cdots, \frac{\partial^{\vert P_k \vert}}{\partial \nu^{\vert P_k \vert}}
    \vy_\nu^*(\vx)   \right).
\end{align}
Since $\gP(q,1)$ contains only one element, the above identity implies that
\begin{align} 
\begin{split} \label{eq:recur-nabla-y-nu}
    \frac{\partial^{q}}{\partial \nu^q} \vy_\nu^*(\vx) &= -  \left(\nabla^2_{yy} g_{\nu}(\vx,\vy_\nu^*(\vx)) \right)^{-1} \sum_{\substack{2 \le k \le q,\\
    \mP \in \gP(q,k)}
    } \vw_{k, \mP}, \\
    {\rm where}~~ \vw_{k, \mP} &= \frac{\partial^{k+1}}{\partial \vy^{k+1}} g_\nu(\vx,\vy_\nu^*(\vx)) \left(
    \frac{\partial^{\vert P_1 \vert}}{\partial \nu^{\vert P_1 \vert}}
    \vy_\nu^*(\vx), \cdots, \frac{\partial^{\vert P_k \vert}}{\partial \nu^{\vert P_k \vert}}
    \vy_\nu^*(\vx)  \right). 
\end{split}
\end{align}
Based on Eq. (\ref{eq:recur-nabla-y-nu}), we can prove by induction that $\frac{\partial^q}{\partial \nu^q} \vy_\nu^*(\vx)$ is $\gO(\kappa^{2q+1})$-Lipschitz continuous in $\nu$ for all $q = 0,\cdots,p$. The induction base for $q = 0,1$ is already proved by \citet{chen2023near}.

\begin{lem}[{\citet[Lemma B.2 and B.5]{chen2023near}}] \label{lem:y-nu-lip}
Let $\nu \in (0,1/(2\kappa)]$.
Under Assumption \ref{asm:LLSC} and \ref{asm:prior},
$\vy_{\nu}^*(\vx)$ and $\frac{\partial}{\partial \nu} \vy_\nu^*(\vx)$ is $\gO(\kappa)$- and $\gO(\kappa^3)$-Lipschitz continuous in $\nu$, respectively.
\end{lem}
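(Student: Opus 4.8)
The plan is to obtain both claims from the implicit function theorem applied to the first-order optimality condition of the perturbed lower-level problem, and then to quantify how the resulting closed forms vary with $\nu$. First I would record that the perturbation stays well-conditioned on the admissible range of $\nu$. Since $\nabla_{yy}^2 g_\nu(\vx,\vy) = \nu \nabla_{yy}^2 f(\vx,\vy) + \nabla_{yy}^2 g(\vx,\vy)$, Assumption \ref{asm:LLSC} gives $\nabla_{yy}^2 g \succeq \mu \mI$ while Assumption \ref{asm:prior} gives $\Vert \nabla_{yy}^2 f \Vert \le L_1$, so $\nabla_{yy}^2 g_\nu \succeq (\mu - \nu L_1)\mI$; the restriction $\nu \le 1/(2\kappa) \le \mu/(2L_1)$ then forces $\nabla_{yy}^2 g_\nu \succeq (\mu/2)\mI$. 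Hence $g_\nu(\vx,\cdot)$ is $(\mu/2)$-strongly convex, $\vy_\nu^*(\vx)$ is uniquely defined, and the Hessian $H_\nu := \nabla_{yy}^2 g_\nu(\vx,\vy_\nu^*(\vx))$ satisfies $\Vert H_\nu^{-1} \Vert \le 2/\mu$. Differentiating the optimality condition $\nu \nabla_y f(\vx,\vy_\nu^*) + \nabla_y g(\vx,\vy_\nu^*) = \vzero$ in $\nu$ yields the closed form $\frac{\partial}{\partial \nu}\vy_\nu^*(\vx) = -H_\nu^{-1}\nabla_y f(\vx,\vy_\nu^*(\vx))$. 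Bounding $\Vert \nabla_y f \Vert \le L_0$ by Assumption \ref{asm:prior} immediately gives $\Vert \frac{\partial}{\partial \nu}\vy_\nu^* \Vert \le 2L_0/\mu \le 2\kappa$, and integrating shows $\vy_\nu^*$ is $\gO(\kappa)$-Lipschitz in $\nu$, which is the first claim.

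For the second claim I would abbreviate $u_\nu := \nabla_y f(\vx,\vy_\nu^*(\vx))$ and work from $\frac{\partial}{\partial \nu}\vy_\nu^* = -H_\nu^{-1}u_\nu$. For two values $\nu_1,\nu_2$ the telescoping identity $-H_{\nu_1}^{-1}u_{\nu_1} + H_{\nu_2}^{-1}u_{\nu_2} = -H_{\nu_1}^{-1}(u_{\nu_1}-u_{\nu_2}) + H_{\nu_1}^{-1}(H_{\nu_1}-H_{\nu_2})H_{\nu_2}^{-1}u_{\nu_2}$ reduces everything to four ingredients: the uniform bound $\Vert H_{\nu_i}^{-1} \Vert \le 2/\mu$ from the first paragraph; the estimate $\Vert u_{\nu_1}-u_{\nu_2} \Vert \le L_1 \Vert \vy_{\nu_1}^*-\vy_{\nu_2}^* \Vert \le 2L_1\kappa\,\vert \nu_1-\nu_2 \vert$, which combines the joint $L_1$-smoothness of $\nabla f$ with the first claim; the bound $\Vert u_{\nu_2} \Vert \le L_0$; and the Hessian increment $\Vert H_{\nu_1}-H_{\nu_2} \Vert$. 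The first three are routine, so the entire difficulty is concentrated in the last one.

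The crux, and the step I expect to be the main obstacle, is bounding $\Vert H_{\nu_1}-H_{\nu_2} \Vert$, because $H_\nu$ depends on $\nu$ both explicitly (through the $\nu$-weight on $\nabla_{yy}^2 f$) and implicitly (through $\vy_\nu^*$). I would split it as $(\nu_1-\nu_2)\nabla_{yy}^2 f(\vx,\vy_{\nu_1}^*) + \nu_2[\nabla_{yy}^2 f(\vx,\vy_{\nu_1}^*)-\nabla_{yy}^2 f(\vx,\vy_{\nu_2}^*)] + [\nabla_{yy}^2 g(\vx,\vy_{\nu_1}^*)-\nabla_{yy}^2 g(\vx,\vy_{\nu_2}^*)]$. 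The first term is $\le L_1\,\vert \nu_1-\nu_2 \vert$, and the last is controlled by the $L_2$-Lipschitz continuity of $\nabla_{yy}^2 g$ (Assumption \ref{asm:prior}) together with the first claim, giving $\le 2L_2\kappa\,\vert \nu_1-\nu_2 \vert$. The middle term is the delicate one: its control requires the Lipschitz continuity of $\nabla_{yy}^2 f$ in $\vy$ (the $q=1$ instance of Assumption \ref{asm:addition}, which is exactly the second-order smoothness implicit in the setting of \citet{chen2023near} from which this lemma is taken), and it is essential that it carries the prefactor $\nu_2 \le 1/(2\kappa)$, so that after pairing with the first-claim Lipschitz constant $2\kappa$ the two factors of $\kappa$ cancel and the term remains $\gO(\bar L)\,\vert \nu_1-\nu_2 \vert$. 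Altogether $\Vert H_{\nu_1}-H_{\nu_2} \Vert = \gO(\kappa \bar L)\,\vert \nu_1-\nu_2 \vert$, and its dominant contribution to the telescoping is $\Vert H_{\nu_1}^{-1} \Vert \cdot \Vert H_{\nu_1}-H_{\nu_2} \Vert \cdot \Vert H_{\nu_2}^{-1} \Vert \cdot \Vert u_{\nu_2} \Vert = \gO(\mu^{-1})\cdot\gO(\kappa\bar L)\cdot\gO(\mu^{-1})\cdot\gO(\bar L)$; tracking the powers with $\kappa = \bar L/\mu$ yields $\gO(\kappa^3)$, dominating the $\gO(\kappa^2)$ contribution of the $u_{\nu_1}-u_{\nu_2}$ term and giving the claimed $\gO(\kappa^3)$ Lipschitz constant for $\frac{\partial}{\partial \nu}\vy_\nu^*$. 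Because $\vx$ and $\nu$ enter $g_\nu$ symmetrically, the same argument with $\nabla_{xy}^2 g_\nu$ in place of the $\nu$-derivative would reprove the analogous Lipschitz bounds in $\vx$.
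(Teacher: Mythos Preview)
The paper does not give its own proof of this lemma; it is quoted verbatim from \citet{chen2023near} (their Lemmas B.2 and B.5) and used only as the induction base inside the proof of Lemma~\ref{lem:solu-Lip-pth}. Your proposal is precisely the standard argument one would expect: derive $\frac{\partial}{\partial\nu}\vy_\nu^* = -H_\nu^{-1}\nabla_y f(\vx,\vy_\nu^*)$ from the implicit function theorem, bound it by $2L_0/\mu = \gO(\kappa)$ for the first claim, and telescope $H_{\nu_1}^{-1}u_{\nu_1} - H_{\nu_2}^{-1}u_{\nu_2}$ for the second. Your $\kappa$-bookkeeping is correct and lands on $\gO(\kappa^3)$.

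You also correctly isolate the one genuine subtlety: the middle term $\nu_2[\nabla_{yy}^2 f(\vx,\vy_{\nu_1}^*) - \nabla_{yy}^2 f(\vx,\vy_{\nu_2}^*)]$ needs Lipschitz continuity of $\nabla_{yy}^2 f$ in $\vy$, which is the $q=1$ case of Assumption~\ref{asm:addition} and is \emph{not} part of Assumption~\ref{asm:prior}. Your observation that the prefactor $\nu_2 \le 1/(2\kappa)$ neutralizes the extra $\kappa$ from $\Vert \vy_{\nu_1}^*-\vy_{\nu_2}^*\Vert$ is exactly right and is what keeps this term from spoiling the final exponent. In the paper's actual use of the lemma---inside Lemma~\ref{lem:solu-Lip-pth}, stated for the $p$th-order class $\gF^{\text{nc-sc}}$---that extra smoothness is in force, so your proof is sound in context; your caveat simply flags a slight mismatch between the hypotheses written in the lemma statement and what the argument (here and in \citet{chen2023near}) actually consumes.
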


Since Eq. (\ref{eq:recur-nabla-y-nu}) also involves $(\nabla_{yy}^2 g_\nu (\vx,\vy_\nu^*(\vx)))^{-1}$, we also need the following lemma that gives its boundedness and Lipschitz continuity constants.

\begin{lem}[{\citet[Lemma B.1 and Eq. 18]{chen2023near}}] \label{lem:nabla-yy-lip}
Let $\nu \in (0,1/(2\kappa)]$.
Under Assumption \ref{asm:LLSC} and \ref{asm:prior},
$(\nabla_{yy} g_{\nu}(\vx,\vy_\nu^*(\vx)))^{-1}$ is $2/\mu$-bounded and $\gO(\kappa^2/\mu)$-Lipschitz continuous in $\nu $.
\end{lem}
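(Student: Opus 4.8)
The plan is to treat the two assertions separately: first the uniform bound $\Vert(\nabla_{yy}^2 g_\nu(\vx,\vy_\nu^*(\vx)))^{-1}\Vert \le 2/\mu$, then the $\gO(\kappa^2/\mu)$-Lipschitz estimate in $\nu$, reducing the latter to a Lipschitz estimate on the matrix itself via the resolvent identity. For the bound I would write $\nabla_{yy}^2 g_\nu(\vx,\vy) = \nabla_{yy}^2 g(\vx,\vy) + \nu \nabla_{yy}^2 f(\vx,\vy)$. Assumption \ref{asm:LLSC} gives $\nabla_{yy}^2 g(\vx,\vy) \succeq \mu \mI$, while part~2 of Assumption \ref{asm:prior} gives $\Vert \nabla_{yy}^2 f(\vx,\vy)\Vert \le L_1 \le \bar L$. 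Since $\nu \le 1/(2\kappa) = \mu/(2\bar L)$, Weyl's inequality yields $\lambda_{\min}(\nabla_{yy}^2 g_\nu(\vx,\vy)) \ge \mu - \nu L_1 \ge \mu - \mu/2 = \mu/2$ at every $(\vx,\vy)$; hence $\nabla_{yy}^2 g_\nu$ is positive definite with inverse of spectral norm at most $2/\mu$, and in particular this holds at the point $(\vx,\vy_\nu^*(\vx))$.

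For the Lipschitz claim, abbreviate $\mM(\nu) := \nabla_{yy}^2 g_\nu(\vx,\vy_\nu^*(\vx))$ and apply the resolvent identity $\mM(\nu_1)^{-1} - \mM(\nu_2)^{-1} = \mM(\nu_1)^{-1}(\mM(\nu_2) - \mM(\nu_1))\mM(\nu_2)^{-1}$. Combined with the $2/\mu$ bound just established this gives $\Vert \mM(\nu_1)^{-1} - \mM(\nu_2)^{-1}\Vert \le (2/\mu)^2 \Vert \mM(\nu_1) - \mM(\nu_2)\Vert$, so it suffices to show that $\mM(\nu)$ is $\gO(\bar L \kappa)$-Lipschitz in $\nu$; multiplying by $(2/\mu)^2$ and using $\bar L/\mu = \kappa$ then delivers the claimed $\gO(\kappa^2/\mu)$ constant. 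To bound $\Vert \mM(\nu_1) - \mM(\nu_2)\Vert$ I would split the variation along the curve $\nu \mapsto (\nu,\vy_\nu^*(\vx))$ into an explicit piece and an implicit piece. The explicit piece $(\nu_1 - \nu_2)\nabla_{yy}^2 f(\vx,\vy_{\nu_1}^*(\vx))$ is at most $L_1 \vert \nu_1 - \nu_2\vert$ by boundedness of $\nabla_{yy}^2 f$. The implicit piece $\nu_2[\nabla_{yy}^2 f(\vx,\vy_{\nu_1}^*) - \nabla_{yy}^2 f(\vx,\vy_{\nu_2}^*)] + [\nabla_{yy}^2 g(\vx,\vy_{\nu_1}^*) - \nabla_{yy}^2 g(\vx,\vy_{\nu_2}^*)]$ is controlled by the $L_2$-Lipschitzness of $\nabla_{yy}^2 g$ (part~3 of Assumption \ref{asm:prior}) and the second-order smoothness of $f$, together with the $\gO(\kappa)$-Lipschitzness of $\nu \mapsto \vy_\nu^*(\vx)$ supplied by Lemma \ref{lem:y-nu-lip}; the prefactor $\nu_2 \le 1/(2\kappa)$ makes the $f$-contribution lower order, so the dominant term is $L_2 \cdot \gO(\kappa)\vert \nu_1 - \nu_2\vert = \gO(\bar L \kappa)\vert \nu_1 - \nu_2\vert$.

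\textbf{Main obstacle.} The crux is the lower-level perturbation $\nu f$: unlike the unperturbed map $\vy^*(\vx)$, here $\nu$ enters $\mM(\nu)$ both explicitly, as the scalar multiplier inside $g_\nu$, and implicitly, through the minimizer $\vy_\nu^*(\vx)$, so one must verify that the extra second-order $f$-terms do not spoil either the positive-definiteness or the Lipschitz estimate. The care needed is twofold: keeping the $f$-terms subdominant by exploiting the smallness $\nu \le 1/(2\kappa)$ (this is what both preserves $\lambda_{\min} \ge \mu/2$ and renders the $f$ part of the implicit piece lower order than the $g$ part), and then tracking the powers of $\kappa$ so that the single $\gO(\kappa)$ factor from the Lipschitz constant of $\vy_\nu^*$ combines with the two factors of $2/\mu$ from the inverse identity to land exactly at $\gO(\kappa^2/\mu)$ rather than a larger power. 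Everything else is routine matrix perturbation bookkeeping.
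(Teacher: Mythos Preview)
The paper does not prove this lemma itself; it is quoted from \citet{chen2023near} and used as a black box in the proof of Lemma~\ref{lem:solu-Lip-pth}. Your argument---Weyl's inequality for the $2/\mu$ bound on the inverse, then the resolvent identity $\mM(\nu_1)^{-1}-\mM(\nu_2)^{-1}=\mM(\nu_1)^{-1}(\mM(\nu_2)-\mM(\nu_1))\mM(\nu_2)^{-1}$ to reduce to an $\gO(\bar L\kappa)$-Lipschitz bound on $\mM(\nu)$, which you obtain by splitting into explicit and implicit pieces and feeding in the $\gO(\kappa)$-Lipschitzness of $\vy_\nu^*$ from Lemma~\ref{lem:y-nu-lip}---is the standard route for such a statement and the bookkeeping of the $\kappa$-powers is correct.

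One point to flag: your control of the implicit $f$-piece $\nu_2\bigl[\nabla_{yy}^2 f(\vx,\vy_{\nu_1}^*)-\nabla_{yy}^2 f(\vx,\vy_{\nu_2}^*)\bigr]$ appeals to Lipschitz continuity of $\nabla_{yy}^2 f$ in $\vy$. As stated in this paper, Assumption~\ref{asm:prior} does \emph{not} supply that---part~3 covers only $\nabla_{xy}^2 g$ and $\nabla_{yy}^2 g$---so the ``second-order smoothness of $f$'' you invoke is formally the $q=1$ case of Assumption~\ref{asm:addition}, not one of the stated hypotheses of the lemma. This is harmless in context, since the only place the paper uses Lemma~\ref{lem:nabla-yy-lip} is inside the inductive step of Lemma~\ref{lem:solu-Lip-pth}, where the full $p$th-order assumption set is in force and that step is only reached for $p\ge 2$. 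But strictly under Assumptions~\ref{asm:LLSC} and~\ref{asm:prior} alone your decomposition does not close; either cite Assumption~\ref{asm:addition} when handling that term, or note that the source paper carries the extra hypothesis.
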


In the remaining proofs, we will use Eq. (\ref{eq:recur-nabla-y-nu}) prove by induction that $\frac{\partial^q}{\partial \nu^q} \vy_\nu^*(\vx)$ is $\gO(\kappa^{2q+1})$-Lipschitz continuous in $\nu$, then we can easily use Eq. (\ref{eq:high-order-x}) to show that $\frac{\partial^{q+1}}{\partial \nu^q \partial \vx} \ell_\nu(\vx)$ is $\gO(\kappa^{2q+1} \bar L)$-Lipschitz continuous in $\nu$ for all $q=0,\cdots,p$. Note that the computational graph of either $\frac{\partial^q}{\partial \nu^q} \vy_\nu^*(\vx))$ or $\frac{\partial^{q+1}}{\partial \nu^q \partial \vx} \ell_\nu(\vx)$ in Eq. (\ref{eq:high-order-x}) or (\ref{eq:recur-nabla-y-nu}) defines a tree, where the root is output, the leaves are inputs, and the other nodes are the intermediate results in the computation. We can analyze the Lipschitz continuities of all the nodes from bottom to top using the following lemma.

\begin{lem}[{\citet[Lemma 12]{luo2022finding}}] \label{lem:composition}
Let $\gT_1 $ and $\gT_2$ be two tensor-to-tensor mappings.
If $\gT_1$ is $D_1$-bounded and $C_1$-Lipschitz continuous, $\gT_2$ is $D_2$-bounded and $C_2$-Lipschitz continuous, then the product mapping $\gT_1 \times \gT_2$ is $D_1D_2$-bounded and $(C_1 D_2 + C_2 D_1)$-Lipschitz continuous.
\end{lem}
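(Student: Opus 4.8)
The plan is to reduce both conclusions to two elementary facts about the tensor operator norm of Appendix~\ref{apx:notation-tensor}: (i) it is \emph{submultiplicative} under the contraction denoted by $\times$, namely $\Vert \gA \times \gB \Vert \le \Vert \gA \Vert\, \Vert \gB \Vert$; and (ii) it satisfies the triangle inequality, which is immediate since $\Vert \,\cdot\, \Vert$ is a genuine norm, being the supremum of the linear functionals $\langle \,\cdot\,, \vu_1 \otimes \cdots \otimes \vu_p \rangle$ over unit vectors. Granting (i), the boundedness claim is one line: for every $\vx$,
\[
\Vert (\gT_1 \times \gT_2)(\vx) \Vert = \Vert \gT_1(\vx) \times \gT_2(\vx) \Vert \le \Vert \gT_1(\vx) \Vert\, \Vert \gT_2(\vx) \Vert \le D_1 D_2,
\]
using the two boundedness hypotheses in the last step.

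For the Lipschitz constant I would apply the standard add-and-subtract identity at two arbitrary points $\vx, \vy$,
\[
\gT_1(\vx) \times \gT_2(\vx) - \gT_1(\vy) \times \gT_2(\vy) = \gT_1(\vx) \times \big( \gT_2(\vx) - \gT_2(\vy) \big) + \big( \gT_1(\vx) - \gT_1(\vy) \big) \times \gT_2(\vy),
\]
and then bound each term by the triangle inequality (ii) followed by submultiplicativity (i):
\[
\Vert \gT_1(\vx) \times \gT_2(\vx) - \gT_1(\vy) \times \gT_2(\vy) \Vert \le \Vert \gT_1(\vx) \Vert\, \Vert \gT_2(\vx) - \gT_2(\vy) \Vert + \Vert \gT_1(\vx) - \gT_1(\vy) \Vert\, \Vert \gT_2(\vy) \Vert.
\]
Substituting $\Vert \gT_1(\vx) \Vert \le D_1$ and $\Vert \gT_2(\vy) \Vert \le D_2$ from boundedness, together with $\Vert \gT_2(\vx) - \gT_2(\vy) \Vert \le C_2 \Vert \vx - \vy \Vert$ and $\Vert \gT_1(\vx) - \gT_1(\vy) \Vert \le C_1 \Vert \vx - \vy \Vert$ from the Lipschitz hypotheses, and collecting terms yields exactly the advertised constant $C_1 D_2 + C_2 D_1$.

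The only genuine content is fact (i), so I expect that to be the main obstacle. The difficulty is not conceptual but one of bookkeeping: one must fix precisely which mode-wise contraction the symbol $\times$ denotes in each use within Eqs.~(\ref{eq:high-order-x})--(\ref{eq:recur-nabla-y-nu}) (inverse-Hessian against a derivative tensor, a high-order derivative against several copies of $\tfrac{\partial^k}{\partial \nu^k} \vy_\nu^*(\vx)$, etc.) and verify submultiplicativity of the operator norm for that contraction. I would establish it directly from the variational definition: writing $\Vert \gA \times \gB \Vert$ as a supremum of $\langle \gA \times \gB, \vu_1 \otimes \cdots \rangle$ over unit test vectors and inserting a unit vector in the contracted mode factorizes the inner product, so that $\langle \gA \times \gB, \vu_1 \otimes \cdots \rangle \le \Vert \gA \Vert\, \Vert \gB \Vert$ follows from the definitions of $\Vert \gA \Vert$ and $\Vert \gB \Vert$. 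Once (i) is in hand, both conclusions follow from the two displays above, with no induction or smoothness beyond the stated bounded and Lipschitz hypotheses required.
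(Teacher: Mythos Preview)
Your proof is correct and is exactly the standard argument one would expect for this lemma. Note, however, that the paper does not actually supply its own proof of Lemma~\ref{lem:composition}: it is stated with a citation to \citet[Lemma~12]{luo2022finding} and then used as a black box inside the proof of Lemma~\ref{lem:solu-Lip-pth}. So there is nothing in the paper to compare against beyond the statement itself; your add-and-subtract argument combined with submultiplicativity of the tensor operator norm is precisely how the cited reference proves it, and your sketch of why submultiplicativity holds (normalize the vector obtained by contracting $\gB$ against its unit test directions, then feed it into the variational definition of $\Vert \gA \Vert$) is the right idea.
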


\begin{proof}[Proof of Lemma \ref{lem:solu-Lip-pth}]
Now, we formally begin to prove by induction that $\frac{\partial^q}{\partial \nu^q} \vy_\nu^*(\vx)$ is $\gO(\kappa^{2q+1})$-Lipschitz continuous in $\nu$ for all $q = 0,\cdots,p$. Recall that the induction base follows Lemma \ref{lem:y-nu-lip}. In the following, we use the induction hypothesis that $\frac{\partial^k}{\partial \nu^k} \vy_\nu^*(\vx))$ is $\gO(\kappa^{2k+1})$-Lipschitz continuous in $\nu$ for all $k = 0,\cdots,q-1$ to prove that $\frac{\partial^{q}}{\partial \nu^{q}} \vy_\nu^*(\vx))$ is $\gO(\kappa^{2q+1})$-Lipschitz continuous in $\nu$. We know that $\frac{\partial^{k+1}}{\partial \vy^{k+1}} g_\nu(\vx,\vy_\nu^*(\vx))$ is $\gO(\bar L)$-bounded and $\gO(\kappa \bar L)$-Lipschitz continuous in $\nu$. Therefore, we can use Lemma \ref{lem:composition} to conclude that each $\vw_{k,\mP}$ is  $\gO(\kappa^{\sum_{j=1}^k (2 \vert P_j \vert -1)} \bar L ) = \gO(\kappa^{2q-k} \bar L)$-bounded and $\gO( \bar L \cdot \kappa^{2q-k+2}  + \kappa \bar L \cdot \kappa^{2q-k} ) = \gO(\kappa^{2q-k+2} \bar L)$-Lipschitz continuous in $\nu$. It further implies that the summation $\vw:=\sum_{\substack{2 \le k \le q,
    \mP \in \gP(q,k)}
    } \vw_{k, \mP}$ is $\gO(\kappa^{2q-2} \bar L)$-bounded and $ \gO(\kappa^{2q} \bar L) $-Lipschitz continuous in $\nu$. 
    Then, we can recall Lemma \ref{lem:nabla-yy-lip} that $(\nabla_{yy} g_{\nu}(\vx,\vy_\nu^*(\vx)))^{-1}$ is $2/\mu$-bounded and $\gO(\kappa^2/\mu)$-Lipschitz continuous in $\nu $, and use Eq. (\ref{eq:recur-nabla-y-nu}) to finish the induction  that $\frac{\partial^{q}}{\partial \nu^q} \vy_\nu^*(\vx) = -  \left(\nabla^2_{yy} g_{\nu}(\vx,\vy_\nu^*(\vx)) \right)^{-1} \vw$ is $\gO(\kappa^{2q+1})$-Lipschitz continuous in $\nu$ for all $q=0,\cdots,p$. Finally, by analogy with the similarity of Eq. (\ref{eq:high-order-x}) and (\ref{eq:recur-nabla-y-nu}), we can follow the same analysis to show that $\frac{\partial^{q+1}}{\partial \nu^q \partial \vx}  \ell_\nu(\vx)$ is  $\gO(\kappa^{2q+1} \bar L) $-Lipschitz continuous in $\nu$ for all $ = 0,\cdots,p$.  
\end{proof}

\section{Proof of Theorem \ref{thm:upper-pth}} \label{apx:proof-thm-upper-pth}

\begin{algorithm*}[htbp]  
\caption{F${}^2$SA-$p$ $(\vx_0,\vy_0)$, ~ odd $p$} \label{alg:F2SA-p-odd}
\begin{algorithmic}[1] 
\STATE $ \vy_0^j = \vy_0$, ~$\forall j \in \sN$ \\[1mm]
\STATE \textbf{for} $ t =0,1,\cdots,T-1 $ \\[1mm]
\STATE \quad \textbf{parallel for} $j = -(p-1)/2, \cdots,(p+1)/2$ \\[1mm]
\STATE \quad \quad $ \vy_{t}^{j,0} =  \vy_{t}^j$ \\[1mm]
\STATE \quad \quad \textbf{for} $ k =0,1,\cdots,K-1$ \\[1mm]
\STATE \quad \quad \quad Sample random i.i.d indexes $\{ (\xi^y_j,\zeta^y_j) \}$. \\[0.3mm]
\STATE \quad \quad \quad $ \vy_{t}^{j,k+1} = \vy_{t}^{j,k} - \eta_y \left( j \nu F_y (\vx_t,\vy_{t}^{j,k}; \xi_j^y) +  G_{y}(\vx_t,\vy_{t}^{j,k}; \zeta_j^y) \right)$ \\[1mm]
\STATE \quad \quad  \textbf{end for} \\[1mm]
\STATE \quad \quad $\vy_{t+1}^j = \vy_{t}^{j,K}$ \\[1mm]
\STATE \quad \textbf{end parallel for} \\[1mm]
\STATE \quad Sample random i.i.d indexes  $\{ ({\xi_i^x},{\zeta_i^x}) \}_{i=1}^S. $ \\[1mm]
\STATE \quad Let $\{ \alpha_j\}_{j=-(p-1)/2}^{(p+1)/2}$ be the $p$th-order finite difference coefficients defined in Lemma \ref{lem:error-pth}. \\[1mm]
\STATE \quad $ \Phi_t = \frac{1}{S} \sum_{i=1}^S \sum_{j=-(p-1)/2}^{(p+1)/2} \alpha_j \left(
j F_x(\vx_t,\vy_{t+1}^j;\xi^{x}_i) + \dfrac{G_x(\vx_t,\vy_{t+1}^j;\zeta^{x}_i)}{\nu} \right)$ \\[1mm]
\STATE \quad  $\vx_{t+1} = \vx_t -  \eta_x {\Phi_t} / {\Vert \Phi_t \Vert}$ \\[1mm]
\STATE \textbf{end for} \\[1mm]
\end{algorithmic}
\end{algorithm*}

In the main text, we only present the algorithm when  $p$ is even.
The algorithm when $p$ is odd follows a similar design, which is presented in Algorithm \ref{alg:F2SA-p-odd} for completeness. Our algorithms consist of a double loop, where the outer loop performs normalized SGD (NSGD) and the inner loop performs SGD. Before we give the formal proof, we first recall the convergence result for (N)SGD.

\begin{lem}[{\citet[Lemma 2]{cutkosky2020momentum}}] \label{lem:NSGD}
Consider the NSGD update $ \vx_{t+1} = \vx_t - \eta F_t / \Vert F_t \Vert$ to optimize a function $f : \sR^d \rightarrow \sR$ with $L$-Lipschitz continuous gradients. We have
\begin{align*}
    \frac{1}{T} \sum_{t=0}^{T-1} \E \Vert \nabla f(\vx_t)) \Vert \le \frac{3 (f(\vx_0) - \inf_{\vx \in \sR^d} f(\vx)) }{\eta T} + \frac{3 L \eta}{2} + \frac{8}{T} \sum_{t=0}^{T-1} \E \Vert F_t - \nabla f(\vx_t) \Vert.
\end{align*}
\end{lem}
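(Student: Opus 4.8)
The plan is to combine the standard descent lemma for $L$-smooth functions with a \emph{pointwise} lower bound on the alignment between the true gradient and the normalized search direction. The attractive feature of this route is that the entire chain of inequalities holds for every realization of $F_t$, so the stochasticity enters only through a single expectation taken at the very end; in particular no unbiasedness or independence assumption on $F_t$ is required, only that the error $\Vert F_t - \nabla f(\vx_t) \Vert$ is controlled in expectation.

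First I would invoke $L$-smoothness through the descent lemma and substitute the normalized update $\vx_{t+1} - \vx_t = - \eta F_t / \Vert F_t \Vert$. Since the step has fixed length $\eta$, the quadratic term collapses to exactly $L\eta^2/2$, giving
\begin{align*}
    f(\vx_{t+1}) \le f(\vx_t) - \eta \left\langle \nabla f(\vx_t), \frac{F_t}{\Vert F_t \Vert} \right\rangle + \frac{L \eta^2}{2}.
\end{align*}
The crux is then to bound the inner product from below in terms of $\Vert \nabla f(\vx_t) \Vert$ and the estimation error. Writing $\langle \nabla f(\vx_t), F_t/\Vert F_t \Vert \rangle = \Vert F_t \Vert + \langle \nabla f(\vx_t) - F_t, F_t/\Vert F_t \Vert \rangle$ and applying Cauchy--Schwarz to the second term (the direction $F_t/\Vert F_t\Vert$ is a unit vector) yields $\langle \nabla f(\vx_t), F_t/\Vert F_t \Vert \rangle \ge \Vert F_t \Vert - \Vert \nabla f(\vx_t) - F_t \Vert$. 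Combining this with the reverse triangle inequality $\Vert F_t \Vert \ge \Vert \nabla f(\vx_t) \Vert - \Vert F_t - \nabla f(\vx_t) \Vert$ produces the clean pointwise estimate $\langle \nabla f(\vx_t), F_t/\Vert F_t \Vert \rangle \ge \Vert \nabla f(\vx_t) \Vert - 2\Vert F_t - \nabla f(\vx_t) \Vert$.

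Plugging this into the descent inequality and rearranging isolates $\eta \Vert \nabla f(\vx_t) \Vert$,
\begin{align*}
    \eta \Vert \nabla f(\vx_t) \Vert \le f(\vx_t) - f(\vx_{t+1}) + 2 \eta \Vert F_t - \nabla f(\vx_t) \Vert + \frac{L \eta^2}{2}.
\end{align*}
Summing over $t = 0, \dots, T-1$ telescopes the function-value gaps to $f(\vx_0) - f(\vx_T)$; lower-bounding $f(\vx_T)$ by $\inf_{\vx \in \sR^d} f(\vx)$, dividing through by $\eta T$, and then taking expectations delivers the claimed bound. In fact this route yields the tighter constants $1$, $2$, and $L/2$ in place of the stated $3$, $8$, and $3L/2$, so the lemma as written follows \emph{a fortiori}.

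Since the alignment bound is the only nontrivial ingredient, there is no genuine obstacle here; the single point needing care is the degenerate event $F_t = \vzero$, on which the normalized direction is undefined. I would dispose of it by the standard convention of setting $\vx_{t+1} = \vx_t$ (or choosing an arbitrary unit direction) on that event, which is harmless because the descent inequality continues to hold. The generous constants in the statement leave considerable slack, so even a looser version of the inner-product estimate would be enough to conclude.
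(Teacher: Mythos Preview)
The paper does not supply its own proof of this lemma; it is quoted verbatim from \citet[Lemma~2]{cutkosky2020momentum}. Your argument is correct and is essentially the standard route: descent lemma plus the pointwise alignment estimate $\langle \nabla f(\vx_t), F_t/\Vert F_t\Vert\rangle \ge \Vert \nabla f(\vx_t)\Vert - 2\Vert F_t - \nabla f(\vx_t)\Vert$, then telescope and take expectations. You are also right that this yields the sharper constants $(1,\tfrac{1}{2},2)$ in place of $(3,\tfrac{3}{2},8)$; the cited reference arrives at the looser constants through a case analysis on whether $\Vert F_t - \nabla f(\vx_t)\Vert$ exceeds a fixed fraction of $\Vert \nabla f(\vx_t)\Vert$, whereas your direct inequality avoids the split and dominates that argument. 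The handling of the degenerate event $F_t = \vzero$ by convention is fine.
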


\begin{lem}[{\citet[Lemma C.1]{kwon2024complexity}}] \label{lem:SGD}
Consider the SGD update $ \vx_{t+1} = \vx_t - \eta F_t$ to optimize a $\mu$-strongly convex function $f : \sR^d \rightarrow \sR$ with $L$-Lipschitz continuous gradients. Let $\vx^* = \arg \min_{\vx \in \sR^d} f(\vx)$ be the unique minimizer to $f$. Suppose $F_t$ is an unbiased estimator to $\nabla f(\vx_t)$ with variance bounded by $\sigma^2$. Setting $\eta < 2/ (\mu+ L)$, we have 
\begin{align*}
    \E \Vert \vx_t - \vx^* \Vert^2 \le (1 - \mu \eta )^t \Vert \vx_0 - \vx^* \Vert^2 + \frac{\eta \sigma^2}{\mu}.
\end{align*}
\end{lem}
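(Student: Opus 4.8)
The plan is to establish the stated linear-convergence-up-to-noise bound via the standard one-step contraction argument for strongly convex SGD. I read the update as the ordinary SGD step $\vx_{t+1} = \vx_t - \eta F_t$; the normalization $F_t/\Vert F_t\Vert$ written in the statement appears to be a transcription slip, since a normalized step cannot produce the additive noise term $\eta\sigma^2/\mu$, and the displayed conclusion is exactly the classical SGD guarantee (matching the cited Kwon et al.\ Lemma C.1). First I would expand the squared distance to the minimizer,
\begin{align*}
    \Vert \vx_{t+1} - \vx^* \Vert^2 = \Vert \vx_t - \vx^* \Vert^2 - 2 \eta \langle F_t, \vx_t - \vx^* \rangle + \eta^2 \Vert F_t \Vert^2,
\end{align*}
and take the conditional expectation given the history up to $\vx_t$. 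Unbiasedness replaces $F_t$ by $\nabla f(\vx_t)$ in the cross term, and the variance bound gives $\E[\Vert F_t \Vert^2 \mid \vx_t] \le \Vert \nabla f(\vx_t) \Vert^2 + \sigma^2$.

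The key analytic ingredient is the co-coercivity inequality (Nesterov) valid for any $\mu$-strongly convex function with $L$-Lipschitz gradient: applied with the minimizer $\vx^*$ and using $\nabla f(\vx^*) = 0$,
\begin{align*}
    \langle \nabla f(\vx_t), \vx_t - \vx^* \rangle \ge \frac{\mu L}{\mu + L} \Vert \vx_t - \vx^* \Vert^2 + \frac{1}{\mu + L} \Vert \nabla f(\vx_t) \Vert^2.
\end{align*}
Substituting this lower bound into the cross term produces a recursion in which the coefficient of $\Vert \nabla f(\vx_t) \Vert^2$ is $\eta^2 - 2\eta/(\mu+L) = \eta(\eta - 2/(\mu+L))$, which is non-positive precisely because $\eta < 2/(\mu+L)$. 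This is exactly where the stepsize restriction is used, and it lets me discard the gradient-norm term entirely.

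After discarding that term I am left with $\E[\Vert \vx_{t+1} - \vx^* \Vert^2 \mid \vx_t] \le (1 - 2\eta\mu L/(\mu+L)) \Vert \vx_t - \vx^* \Vert^2 + \eta^2 \sigma^2$. Using $L \ge \mu$, so that $2L/(\mu+L) \ge 1$, I relax the contraction factor to the cleaner $1 - \mu\eta$, giving the full-expectation recursion $d_{t+1} \le (1 - \mu\eta) d_t + \eta^2 \sigma^2$ with $d_t := \E\Vert\vx_t - \vx^*\Vert^2$. Finally I unroll this geometric recursion and bound the accumulated noise by $\eta^2 \sigma^2 \sum_{k \ge 0} (1-\mu\eta)^k = \eta\sigma^2/\mu$, yielding the claimed inequality. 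The only genuine step is invoking the co-coercivity bound and checking the sign of the gradient-norm coefficient against the stepsize condition; the remainder is routine algebra and a geometric-series estimate, so I do not anticipate a substantive obstacle.
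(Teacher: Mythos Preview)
Your proposal is correct and is precisely the standard contraction argument for strongly convex SGD; you also correctly flag that the normalized update in the statement is a transcription slip. The paper does not supply its own proof of this lemma---it is quoted verbatim as \citep[Lemma~C.1]{kwon2024complexity}---so there is no alternative argument to compare against.
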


The following two lemmas are also useful in the analysis. 

\begin{lem}[{\citet[Lemma 4.1]{chen2023near}}] \label{lem:varphi-smooth}
Under Assumption \ref{asm:LLSC}, and \ref{asm:prior}, the hyper-objective $\varphi(\vx) = f(\vx,\vy^*(\vx))$ is differentiable and has $L_\varphi = \gO(\bar L \kappa^3)$-Lipschitz continuous gradients.
\end{lem}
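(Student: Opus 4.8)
The plan is to obtain differentiability through the implicit function theorem and then bound the Lipschitz constant of $\nabla \varphi$ factor by factor using the closed form in Eq.~\eqref{eq:hyper-grad}. First I would use Assumption~\ref{asm:LLSC}: strong convexity of $g(\vx,\cdot)$ makes $\nabla_{yy}^2 g(\vx,\vy) \succeq \mu \mI$ invertible with $\Vert [\nabla_{yy}^2 g]^{-1} \Vert \le 1/\mu$. Applying the implicit function theorem to the stationarity condition $\nabla_y g(\vx,\vy^*(\vx)) = \vzero$ shows $\vy^*$ is differentiable with Jacobian $\nabla \vy^*(\vx) = -[\nabla_{yy}^2 g(\vx,\vy^*(\vx))]^{-1} \nabla_{yx}^2 g(\vx,\vy^*(\vx))$; the chain rule applied to $\varphi(\vx) = f(\vx,\vy^*(\vx))$ then reproduces Eq.~\eqref{eq:hyper-grad}, which establishes differentiability of $\varphi$.

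Next I would collect the boundedness constants that feed the Lipschitz estimate. The second part of Assumption~\ref{asm:prior} gives $\Vert \nabla^2 g \Vert \le L_1$, hence $\Vert \nabla_{xy}^2 g \Vert \le L_1$, while the first part yields $\Vert \nabla_y f \Vert \le L_0$. Combined with $\Vert [\nabla_{yy}^2 g]^{-1} \Vert \le 1/\mu$, the Jacobian formula gives $\Vert \nabla \vy^*(\vx) \Vert \le L_1/\mu \le \kappa$, so $\vy^*$ is $\kappa$-Lipschitz in $\vx$. This Lipschitz bound on $\vy^*$ is the key device that converts the joint smoothness in Assumption~\ref{asm:prior} into Lipschitz continuity of each factor evaluated along the map $\vx \mapsto (\vx,\vy^*(\vx))$: composing an $L$-Lipschitz derivative with $\vx \mapsto (\vx,\vy^*(\vx))$ incurs a factor $(1+\kappa) = \gO(\kappa)$.

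Then I would treat the three factors of the correction term $\nabla_{xy}^2 g \cdot [\nabla_{yy}^2 g]^{-1} \cdot \nabla_y f$ separately. Using the third part of Assumption~\ref{asm:prior} together with the $\kappa$-Lipschitzness of $\vy^*$, the maps $\vx \mapsto \nabla_{xy}^2 g(\vx,\vy^*(\vx))$ and $\vx \mapsto \nabla_y f(\vx,\vy^*(\vx))$ are $\gO(\bar{L}\kappa)$-Lipschitz and bounded by $\gO(\bar{L})$. For the inverse-Hessian factor I would invoke the resolvent identity $A^{-1} - B^{-1} = A^{-1}(B-A)B^{-1}$, which together with $\Vert [\nabla_{yy}^2 g]^{-1} \Vert \le 1/\mu$ and the $\gO(\bar{L}\kappa)$-Lipschitzness of $\vx \mapsto \nabla_{yy}^2 g(\vx,\vy^*(\vx))$ gives an $\gO(\bar{L}\kappa/\mu^2) = \gO(\kappa^3/\bar{L})$ Lipschitz constant while remaining $\kappa/\bar{L}$-bounded. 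Chaining these three factors through the product rule for bounded-and-Lipschitz mappings (Lemma~\ref{lem:composition}) collapses the correction term to $\gO(\bar{L}\kappa^3)$-Lipschitz, and adding the $\gO(\bar{L}\kappa)$ Lipschitz constant of the direct term $\nabla_x f(\vx,\vy^*(\vx))$ yields $L_\varphi = \gO(\bar{L}\kappa^3)$.

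The main obstacle I expect is the $\kappa$-power bookkeeping rather than any conceptual difficulty: the dominant $\kappa^3$ arises solely from the inverse-Hessian factor, where the resolvent identity contributes $1/\mu^2$ which, after substituting $1/\mu = \kappa/\bar{L}$ and multiplying by the $\bar{L}\kappa$ from the Hessian's Lipschitzness (itself already carrying one power of $\kappa$ through $\vy^*$), produces $\kappa^3$. Carrying both the boundedness and the Lipschitz constant of every intermediate product in step through Lemma~\ref{lem:composition} is the only place an off-by-one power of $\kappa$ could slip in, so I would track these two constants explicitly at each node of the product; everything else reduces to routine applications of that lemma.
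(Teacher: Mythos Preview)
Your proposal is correct and follows the standard route for this result. Note, however, that the paper does not actually prove Lemma~\ref{lem:varphi-smooth}: it is quoted verbatim as \citep[Lemma 4.1]{chen2023near} without argument. Your outline---implicit function theorem for differentiability, the Jacobian bound $\Vert \nabla \vy^*(\vx) \Vert \le \kappa$, the resolvent identity for the inverse-Hessian factor, and the product rule of Lemma~\ref{lem:composition} to aggregate the three factors---is precisely the argument one finds in the cited reference (and in \citet{ghadimi2018approximation} before it), so there is nothing to contrast; you are simply supplying what the paper omits by citation.
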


\begin{lem}[{\citet[Lemma B.6]{chen2023near}}] \label{lem:y-star-lip-x}
Let $\nu \in (-1/\kappa,1/\kappa)$.
Under Assumption \ref{asm:LLSC}, and \ref{asm:prior}, the optimal (perturbed) lower-level solution mapping $\vy_\nu^*(\vx) = \arg \min_{\vy \in \sR^{d_y}} \ell_v(\vx,\vy)$ is $4 \kappa$-Lipschitz continuous in $\vx$.
\end{lem}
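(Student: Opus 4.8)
The plan is to prove the Lipschitz bound by a standard sensitivity argument applied to the first-order optimality condition of the perturbed lower-level problem, using strong monotonicity rather than differentiating an implicit Jacobian. First I would check that $g_\nu(\vx,\cdot)$ is strongly convex so that $\vy_\nu^*(\vx)$ is well-defined. Writing $\nabla_y g_\nu = \nu \nabla_y f + \nabla_y g$ and using that $\nabla f$ is $L_1$-Lipschitz (so $\nabla_y f(\vx,\cdot)$ is $L_1$-Lipschitz in $\vy$) together with the $\mu$-strong convexity of $g$ from Assumption \ref{asm:LLSC}, the map $\vy \mapsto \nabla_y g_\nu(\vx,\vy)$ is strongly monotone with modulus $m := \mu - |\nu| L_1$; since $|\nu| < 1/\kappa = \mu/\bar L$ and $L_1 \le \bar L$, we have $m > 0$, guaranteeing uniqueness of $\vy_\nu^*(\vx)$ (note the lemma statement writes $\ell_v$, which should read $g_\nu$ as defined earlier).

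Next I would fix $\vx_1,\vx_2$, set $\vy_i = \vy_\nu^*(\vx_i)$, and invoke the stationarity conditions $\nabla_y g_\nu(\vx_i,\vy_i) = 0$ for $i=1,2$. Strong monotonicity of $\nabla_y g_\nu(\vx_2,\cdot)$ gives
\[
m\,\|\vy_1-\vy_2\|^2 \le \big\langle \nabla_y g_\nu(\vx_2,\vy_1) - \nabla_y g_\nu(\vx_2,\vy_2),\; \vy_1-\vy_2 \big\rangle = \big\langle \nabla_y g_\nu(\vx_2,\vy_1),\; \vy_1-\vy_2 \big\rangle,
\]
where the equality uses $\nabla_y g_\nu(\vx_2,\vy_2)=0$. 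I would then write $\nabla_y g_\nu(\vx_2,\vy_1) = \nabla_y g_\nu(\vx_2,\vy_1) - \nabla_y g_\nu(\vx_1,\vy_1)$ (using $\nabla_y g_\nu(\vx_1,\vy_1)=0$) and bound its norm by the Lipschitz continuity of $\nabla_y g_\nu$ in $\vx$, which has constant $(1+|\nu|)L_1$ because both $\nabla_y f$ and $\nabla_y g$ are $L_1$-Lipschitz in $\vx$ (Assumption \ref{asm:prior}). Cauchy--Schwarz and cancellation of one factor $\|\vy_1-\vy_2\|$ then yield $\|\vy_1-\vy_2\| \le \frac{(1+|\nu|)L_1}{m}\,\|\vx_1-\vx_2\|$.

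Finally, to extract the stated constant I would bound the coefficient: for $|\nu| \le 1/(2\kappa)$ one gets $|\nu|L_1 \le \mu/2$, hence $m \ge \mu/2$, while $1+|\nu| \le 2$, so $\frac{(1+|\nu|)L_1}{m} \le \frac{2L_1}{\mu/2} = 4L_1/\mu \le 4\kappa$. The main obstacle --- and the only place the restriction on $\nu$ is essential --- is keeping the effective strong-convexity modulus $m = \mu - |\nu|L_1$ bounded away from zero: as $|\nu|\to 1/\kappa$ with $L_1 = \bar L$ the naive modulus degenerates and the coefficient blows up, so the clean $4\kappa$ constant really uses the safe range $|\nu|\le 1/(2\kappa)$ that is employed elsewhere (cf.\ Lemmas \ref{lem:y-nu-lip} and \ref{lem:nabla-yy-lip}). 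I would also prefer this monotonicity route over the Jacobian identity $\tfrac{\partial}{\partial \vx}\vy_\nu^*(\vx) = -[\nabla_{yy}^2 g_\nu]^{-1}\nabla_{xy}^2 g_\nu$, since Assumption \ref{asm:prior} only guarantees Lipschitz gradients of $f$ (not twice differentiability), so the Hessian form would need an extra almost-everywhere or smoothing argument that the variational inequality avoids.
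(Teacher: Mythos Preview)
The paper does not prove this lemma; it is imported directly from \citet[Lemma~B.6]{chen2023near}, so there is no in-paper argument to compare against. Your monotonicity/sensitivity argument is correct and is the standard route for this kind of result: strong monotonicity of $\nabla_y g_\nu(\vx,\cdot)$ plus Lipschitz continuity of $\nabla_y g_\nu(\cdot,\vy)$ in $\vx$ yields the Lipschitz bound on the argmin without invoking second derivatives of $f$.

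Your caveat about the range of $\nu$ is on point. The clean constant $4\kappa$ indeed relies on $|\nu|\le 1/(2\kappa)$ so that $\mu-|\nu|L_1\ge\mu/2$; on the full open interval $(-1/\kappa,1/\kappa)$ with $L_1=\bar L$ the bound from your argument degenerates as $|\nu|\to 1/\kappa$. This is harmless for the paper's purposes, since the parameter choice in Theorem~\ref{thm:upper-pth} enforces $\nu=\gO(1/\kappa)$ with a small constant, and the companion Lemmas~\ref{lem:y-nu-lip} and~\ref{lem:nabla-yy-lip} already restrict to $\nu\in(0,1/(2\kappa)]$; but strictly speaking the lemma as stated (with the wider interval and the exact constant $4\kappa$) is stronger than what your proof---or any proof based on the modulus $\mu-|\nu|L_1$---delivers.
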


Now we are ready to prove Theorem \ref{thm:upper-pth}. 

\begin{proof}[Proof of Theorem \ref{thm:upper-pth}]
We separately consider the complexity for the outer loop and the inner loop.
\paragraph{Outer Loop.} 
According to Lemma \ref{lem:varphi-smooth}, the hyper-objective $\varphi(\vx)$ has $L_\varphi = \gO(\bar L \kappa^3)$-Lipschitz continuous gradients.
If we can guarantee the condition
\begin{align} \label{eq:cond-lower-grad}
   \E \Vert \Phi_t - \nabla \varphi(\vx_t) \Vert  \le \frac{\epsilon}{32}, \quad t = 0,\cdots,T-1,
\end{align}
then we can further set $\eta_x = \nicefrac{\epsilon}{6 L_\varphi}$ and apply Lemma \ref{lem:NSGD} to conclude that the algorithm can provably find an $\epsilon$-stationary point of $\varphi(\vx)$ in $T = \left \lceil \nicefrac{6 \Delta}{\epsilon \eta_x} \right \rceil = \gO( \Delta L_1 \kappa^3 \epsilon^{-2}  ) $ outer iterations.
\paragraph{Inner Loop.} From the above analysis, the remaining goal is to show that the inner loop always returns $\Phi_t$ 
satisfying Eq. (\ref{eq:cond-lower-grad}), which requires $\E \Vert \Phi_t - \nabla \varphi(\vx_t) \Vert =  \gO(\epsilon)$ for all $t = 0,\cdots,T-1$. Note that the setting of mini-batch size $S = 
\Omega \left( \nicefrac{\sigma^2}{\nu^2 \epsilon^2} \right) $ ensures that
\begin{align*}
\begin{cases}
     \E \left \Vert \Phi_t - \sum_{j=-p/2}^{p/2} \alpha_j \left( j \nabla_x f(\vx_t,\vy_{t+1}^j) + \dfrac{\nabla_x g(\vx_t,\vy_{t+1}^j)}{\nu} \right) \right \Vert =\gO(\epsilon), &\quad p \text{ is even}; \\[5mm]
      \E \left \Vert \Phi_t - \sum_{j=-(p-1)/2}^{(p+1)/2} \alpha_j \left( j \nabla_x f(\vx_t,\vy_{t+1}^j) + \dfrac{\nabla_x g(\vx_t,\vy_{t+1}^j)}{\nu} \right) \right \Vert =\gO(\epsilon), & \quad p \text{ is odd}.
\end{cases}
\end{align*}
By Lemma \ref{lem:solu-Lip-pth} and Lemma \ref{lem:error-pth}, setting $\nu = \gO( (\nicefrac{\epsilon}{\bar L \kappa^{2p+1}})^{1/p})$ can 
ensure that
\begin{align*}
\begin{cases}
      \left \Vert  \nabla \varphi(\vx_t) - \sum_{j=-p/2}^{p/2} \alpha_j \left( j \nabla_x f(\vx_t,\vy_{j \nu}^*(\vx_{t})) + \dfrac{\nabla_x g(\vx_t,\vy_{j \nu}^*(\vx_t))}{\nu} \right) \right \Vert =\gO(\epsilon), &\quad p \text{ is even}; \\[5mm] 
       \left \Vert  \nabla \varphi(\vx_t) - \sum_{j=-(p-1)/2}^{(p+1)/2} \alpha_j \left( j \nabla_x f(\vx_t,\vy_{j \nu}^*(\vx_{t})) + \dfrac{\nabla_x g(\vx_t,\vy_{j \nu}^*(\vx_t))}{\nu} \right) \right \Vert =\gO(\epsilon), &\quad p \text{ is odd}.
\end{cases}
\end{align*}
Therefore, a sufficient condition of $\E \Vert \Phi_t - \nabla \varphi(\vx_t) \Vert =  \gO(\epsilon)$ is 
\begin{align} \label{eq:cond-lower}
\begin{split}
    \begin{cases}
    \Vert \vy_{t+1}^j -  \vy_{j \nu}^*(\vx_t) \Vert  = \gO(\nu \epsilon / L_1), ~~ \forall j = -p/2,\cdots,p/2, &\quad  p \text{ is even}; \\[2mm]
    \Vert \vy_{t+1}^j -  \vy_{j \nu}^*(\vx_t) \Vert  = \gO(\nu \epsilon / L_1), ~~ \forall j = -(p-1)/2,\cdots,(p+1)/2, &\quad  p \text{ is odd}.
\end{cases}
\end{split}
\end{align}
Our next goal is to show that our parameter setting fulfills Eq. (\ref{eq:cond-lower}).
Note that for $\nu = \gO(1/\kappa)$, the (perturbed) lower-level problem $g_{j \nu}(\vx,\vy)$ is $\Omega(\mu)$-strongly convex in $\vy$ and has $\gO(L_1)$-Lipschitz continuous gradients jointly in $(\vx,\vy)$.
Therefore, if we set $\eta_y \lesssim 1/L_1$, then we can apply Lemma \ref{lem:SGD} on the lower-level problem $g_{j \nu}(\vx,\vy)$ to conclude that for ant $j$, we have
\begin{align*} 
    \E \Vert \vy_{t+1} -  \vy_{j \nu}^*(\vx_t) \Vert^2 \le  (1- \mu \eta_y)^K \Vert \vy_t - \vy_{j \nu}^*(\vx_t) \Vert^2 + \gO(\eta_y \sigma^2 / \mu).
\end{align*}
Comparing it with Eq. (\ref{eq:cond-lower}), we can set $\eta_y = \gO( \nicefrac{\nu^2 \epsilon^2}{L_1 \kappa \sigma^2})$ to ensure that for ant $j$, we have
\begin{align*} 
    \E \Vert \vy_{t+1} -  \vy_{j \nu}^*(\vx_t) \Vert \le  (1- \mu \eta_y)^K \Vert \vy_t - \vy^*_{j \nu}(\vx_t) \Vert + \gO( \nu \epsilon / L_1 ).
\end{align*}
Further, we can use Lemma \ref{lem:y-star-lip-x} and the triangle inequality to obtain that for ant $j$, we have
\begin{align} \label{eq:lower-sgd-recursion}
    \E \Vert \vy_{t+1} -  \vy_{j \nu}^*(\vx_t) \Vert \le  (1- \mu \eta_y)^K ( \Vert \vy_t - \vy_{j \nu}^*(\vx_{t-1}) \Vert + 4 \kappa \Vert \vx_{t} - \vx_{t-1} \Vert   )  + \gO( \nu \epsilon / L_1 ).
\end{align}
The recursion (\ref{eq:lower-sgd-recursion}) implies our setting of $K$ can ensure that Eq. (\ref{eq:cond-lower}) holds for all $t = 0,\cdots,T-1$.
We give an induction-based proof. To let the induction base holds for $t=1$, it suffices to set $K = \Omega( \nicefrac{\log (\nicefrac{R L_1}{\nu \epsilon})}{\mu \eta_y}) = \Omega( \nicefrac{\log (\nicefrac{R L_1}{\nu \epsilon}) \kappa^2 \sigma^2}{\nu^2 \epsilon^2})$, where
$\Vert \vy_{j \nu}^*(\vx_0) - \vy^*(\vx_0) \Vert^2 = \gO(R)$ is due to the setting of $\nu = \gO(\nicefrac{R}{ \kappa})$ and the fact that $\vy_\nu^*(\vx)$ is $\kappa$-Lipschitz in $\nu$ by Lemma \ref{lem:y-nu-lip}.
Next, assume that we have already guaranteed Eq.~(\ref{eq:cond-lower}) holds for iteration $t$, we prove that our setting of $K$ implies  Eq.~(\ref{eq:cond-lower}) holds for iteration $t+1$. Note that the NSGD update in $\vx$ means that $\Vert \vx_t  -\vx_{t-1} \Vert = \eta_x = \gO(\nicefrac{\epsilon}{6 L_1 \kappa^3})$. Therefore, Eq.~(\ref{eq:lower-sgd-recursion}) in conjunction with the induction hypothesis indicates that 
\begin{align*}
    \E \Vert \vy_{t+1} -  \vy_{j \nu}^*(\vx_t) \Vert \lesssim  (1- \mu \eta_y)^K \left( \frac{\nu \epsilon}{L_1} + \frac{\epsilon}{L_1 \kappa^2} \right)  +  \frac{\nu \epsilon}{L_1}.
\end{align*}
Therefore, we know that to let Eq. (\ref{eq:cond-lower}) holds for iteration $t+1$,
it suffices to let $K = \Omega( \nicefrac{\log(\nicefrac{1}{\nu \kappa^2})}{ \mu \eta_y} ) = \Omega( \nicefrac{\log (\nicefrac{1}{\nu \kappa^2}) \kappa^2 \sigma^2}{\nu^2 \epsilon^2})$. This finishes the induction.
\paragraph{Total Complexity.} According to the above analysis, we set $\nu \asymp  (\nicefrac{\epsilon}{\bar L \kappa^{2p+1}})^{1/p}$, $S \asymp \nicefrac{\sigma^2}{\nu^2 \epsilon^2}$,  $T \asymp \Delta L_1 \kappa^3 \epsilon^{-2}$, and $K \asymp \nicefrac{\log (\nicefrac{R L_1 \kappa}{\nu \epsilon}) \kappa^2 \sigma^2}{\nu^2 \epsilon^2} $ to ensure that the algorithm provably find an $\epsilon$-stationary point of $\varphi(\vx)$. Since $S \lesssim K$, the total complexity of the algorithm is
\begin{align*}
    &\quad p T (S +K) = \gO(p TK)=\gO \left( p \cdot \frac{\Delta L_1 \kappa^3}{\epsilon^2} \cdot \frac{\kappa^2 \sigma^2}{\nu^2 \epsilon^2} \log \left( \frac{R L_1 \kappa}{\nu \epsilon} \right) \right) \\
    &= \gO \left( \frac{p \Delta L_1 \bar L^{2/p} \sigma^2\kappa^{9+ 2/p}}{\epsilon^{4+2/p}} 
    \log \left( \frac{R L_1 \kappa}{\nu \epsilon} \right)
    \right).
\end{align*}
\end{proof}

\section{Proof of Theorem \ref{thm:lower}} \label{apx:proof-lower}

We prove our lower bound for stochastic nonconvex-strongly-convex bilevel optimization via a reduction to the lower bound for stochastic single-level nonconvex optimization \citep{arjevani2023lower}.
To state their lower bound, we first need to introduce the function class, oracle class, algorithm class, and the complexity measures.

\begin{dfn}\label{dfn:func-class-Arj}
Given any $\Delta>0$ and $L_1>0$, we use $\gF^{\rm nc}(L_1,\Delta)$ to denote the set of all smooth functions $f: \sR^d \rightarrow \sR$ that satisfies
\begin{enumerate}
    \item $f(\vzero) - \inf_{\vx \in \sR^d} f(\vx) \le \Delta$; 
    \item $\nabla f(\vx)$ is $L_1$-Lipschitz continuous.
\end{enumerate}
\end{dfn}

\begin{dfn}\label{dfn:oracle-class-Arj}
Given a function $\sR^d \rightarrow \sR$, we use $\sO(\sigma^2)$ to denote the set of all stochastic first-order oracles that return an unbiased stochastic estimator to $\nabla f$ with variance bounded by $\sigma^2$.
\end{dfn}

\begin{dfn} \label{dfn:alg-class-Arj}
Let $f: \sR^d \rightarrow \sR$ be a differentiable function and $F: \sR^d \rightarrow \sR$ be the stochastic estimator to $\nabla f$. 
A randomized first-order algorithm $\texttt{A}$ consists of a distribution $\gP_r$ over a measurable set $\gR$ and a sequence of measurable mappings $\{\texttt{A}_t \}_{t \in \sN}$ such that
\begin{align*}
    \vx_{t+1} = \texttt{A}_t(r, F(\vx_0), \cdots, F(\vx_t)), \quad t \in \sN_+,
\end{align*}
where $r \sim \gP_r$ is drawn a single time at the beginning of the protocol. We let $\gA_{\rm rand}$ to denote the class of all the algorithms that follow the above protocol.
\end{dfn}

\begin{dfn}\label{dfn:complexity-Arj}
We define distributional complexity of $\gA_{\rm rand}$ to find an $\epsilon$-stationary point of the functions  in $\gF^{\rm nc}(L_1,\Delta)$ with oracle $\sO(\sigma^2)$ as 
\begin{align*}
    {\rm Compl}_\epsilon(L_1,\Delta, \sigma^2) = \sup_{\texttt{O} \in \sO(\sigma^2)}\sup_{\gP_f \in \gP[\gF(\Delta,f)]}  \inf_{\texttt{A} \in \gA_{\rm rand}}  \inf \{ t \in \sN \mid \E \Vert \nabla f(\vx_t) \Vert \le \epsilon \},
\end{align*}
where the expectation is taken over the sampling of $f$ from $\gP_f$, the randomness in the oracle $\texttt{O}$, and the randomness in the algorithm $\texttt{A}$, $\{\vx_t \}_{t \in \sN}$ is the sequence generated by $\texttt{A}$ running on $f$ with oracle $\texttt{O}$, and $\gP[\gF^{\rm nc}(L_1,\Delta)]$ denotes the set of all distributions over $\gF^{\rm nc}(L_1,\Delta)$.
\end{dfn}

All the above definitions are merely restatements of \citep[Section 2]{arjevani2023lower}.
Although Definition \ref{dfn:complexity-Arj} uses the definition of distributional complexity, by Yao's minimax principle is also a lower bound for the worst-case complexity. 
Now, we recall the construction in \citep{arjevani2023lower} for proving the $\Omega(\epsilon^{-4})$ lower bound. Formally, we define the randomized function
\begin{align} \label{eq:hard-fU}
f_\mU(\vx) = \frac{L_1 \beta^2}{\bar L_1} f^{\rm nc} (\rho(\mU^\top  \vx / \beta)) + \frac{L_1 \lambda}{2 \bar L_1} \Vert \vx \Vert^2,    
\end{align}
where $\bar L_1 = 155$, $\beta = 4 \bar L_1 \epsilon / L_1 $, $\rho: \sR^T \rightarrow \sR^T$ is $\rho(\vx) = \vx \big / \sqrt{1 + \Vert \vx \Vert^2/ R^2}$, $R = 230\sqrt{T}$, $\lambda=1/5$, and
$f_T : \sR^T \rightarrow \sR$ is the nonconvex hard instance introduced by \citet{carmon2020lower}:
\begin{align*}
    f^{\rm nc}(x) : = -\Psi(1) \Psi(x_1) + \sum_{i=2}^T [ \Phi(-x_{i-1}) \Phi(-x_i) - \Psi(x_{i-1}) \Phi(x_i)].
\end{align*}
In the above, the component functions $\Psi, \Phi:\sR \rightarrow \sR$ are defined as
\begin{align*}
    \Psi(t) = 
    \begin{cases}
        0, & t \le 1/2, \\
        \exp(1 - 1/(2t-1)^2) , & t< 1/2 
    \end{cases}
    \quad \text{and} \quad 
    \Phi(t) = \sqrt{\rm e} \int_{-\infty}^t \exp(-t^2/2) {\rm d} t.
\end{align*}
For the hard instance in Eq. (\ref{eq:hard-fU}), \citet{arjevani2023lower} further
defined the stochastic gradient estimator $F_\mU$ as 
\begin{align} \label{eq:stoc-grad}
F_\mU(\vx) = \frac{L_1 }{\bar L_1} \left( \beta (\nabla \rho(\vx))^\top \mU F_T(\mU^\top \rho(\vx)) + \lambda \vx \right).
\end{align}
In the above, $F_T:\sR^T \rightarrow \sR^T$ is the stochastic gradient estimator of 
$\nabla f^{\rm nc}$ defined by
\begin{align*} 
    [F_T(\vx)]_i = \nabla_i f^{\rm nc}(\vx) \left( 1 + \vone_{i > {\rm prog}_{1/4}(x)} (\xi/\gamma-1) \right), \quad \xi \sim {\rm Bernoulli}(\gamma),
\end{align*}
where ${\rm prog}_{\alpha}(x) = \max\{ i\ge 0 \mid \vert x_i \vert > \alpha \}$  and $\gamma =  \min\{ (46 \epsilon)^2 / \sigma^2,1 \}$.  
For the above construction,
\citet{arjevani2023lower} showed the following lower bound.

\begin{thm}[{\citep[Theorem 3]{arjevani2023lower}}] \label{thm:lower-Arj}
There exist numerical constants $c,c'>0$ such that for all $\Delta>0$, $L_1>0$ and $\epsilon \le c \sqrt{L_1 \Delta}$,
the construction of function $f_\mU: \sR^d \rightarrow \sR $ and stochastic first-order oracle $F_\mU: \sR^d \rightarrow \sR$  in Eq. (\ref{eq:hard-fU}) and (\ref{eq:stoc-grad}) together give a distribution over the function class $\gF^{\rm nc}(L_1,\Delta)$ and a stochastic first-order oracle $\texttt{O} \in \sO(\sigma^2)$ such that
    \begin{align*}
        {\rm Compl}_\epsilon(L_1,\Delta, \sigma^2) \ge  c' \Delta L_1 \sigma^2 \epsilon^{-4}. 
    \end{align*}
\end{thm}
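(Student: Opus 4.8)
The plan is to establish the lower bound on the distributional complexity directly from the stated construction, by verifying three facts: \textbf{(i)} the randomized function $f_\mU$ almost surely belongs to $\gF^{\rm nc}(L_1,\Delta)$; \textbf{(ii)} the estimator $F_\mU$ of Eq.~(\ref{eq:stoc-grad}) is unbiased for $\nabla f_\mU$ with variance at most $\sigma^2$, so it defines an oracle in $\sO(\sigma^2)$; and \textbf{(iii)} against the resulting instance distribution (over the random orthonormal $\mU$ and the oracle's internal Bernoulli randomness), every algorithm needs $\Omega(T/\gamma)$ queries to reach an $\epsilon$-stationary point. Since the distribution over instances is fixed, it suffices to lower bound deterministic algorithms, as conditioning on the seed $r$ in Definition~\ref{dfn:alg-class-Arj} turns any randomized algorithm into a mixture of deterministic ones. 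The dimension and noise parameters are tuned so that $T=\Theta(\Delta L_1/\epsilon^2)$ and $1/\gamma=\Theta(\sigma^2/\epsilon^2)$, whence $T/\gamma=\Theta(\Delta L_1\sigma^2\epsilon^{-4})$; the hypothesis $\epsilon\le c\sqrt{L_1\Delta}$ is exactly what guarantees $T\ge 1$ so the construction is non-degenerate.

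Facts \textbf{(i)} and \textbf{(ii)} are the calibration calculations forced by the constants. The soft clipping $\rho(x)=x/\sqrt{1+\|x\|^2/R^2}$ confines its argument to a ball of radius $R$, so the rescaled term $\frac{L_1\beta^2}{\bar L_1}f^{\rm nc}(\rho(\mU^\top x/\beta))$ has a bounded gradient-Lipschitz constant; adding the quadratic $\frac{L_1\lambda}{2\bar L_1}\|x\|^2$ and using $\bar L_1=155$, $\lambda=1/5$ makes the total constant equal to $L_1$, while orthogonality of $\mU$ leaves these unchanged. The value gap scales like $\frac{L_1\beta^2}{\bar L_1}\Delta_0 T$ for a universal constant $\Delta_0$, and with $\beta=4\bar L_1\epsilon/L_1$ the choice $T=\Theta(\Delta L_1/\epsilon^2)$ makes it at most $\Delta$. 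For the oracle, $\E[\xi/\gamma]=1$ gives unbiasedness of $F_T$ and hence of $F_\mU$ through the chain rule in Eq.~(\ref{eq:stoc-grad}); by the zero-chain only the single coordinate ${\rm prog}_{1/4}(x)+1$ carries a nonzero masked signal, so the excess variance is $(\nabla_i f^{\rm nc})^2\Var(\xi/\gamma)=\gO(1/\gamma)$, and after the $(L_1\beta/\bar L_1)^2=\Theta(\epsilon^2)$ rescaling the choice $\gamma=\min\{(46\epsilon)^2/\sigma^2,1\}$ brings this below $\sigma^2$.

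The core is the progress bound \textbf{(iii)}. Unwinding the $\beta$-scaling, $\epsilon$-stationarity of $f_\mU$ forces the rotated-clipped iterate to have ${\rm prog}_{1/2}=T$, i.e.\ the algorithm must activate all $T$ coordinates in the $\mU$-basis; the robust zero-chain property of $f^{\rm nc}$ guarantees $\nabla_i f^{\rm nc}$ vanishes on every coordinate beyond ${\rm prog}_{1/2}+1$, so coordinates can only be discovered one at a time. The stochastic oracle exposes the next coordinate's clean signal only through the factor $\xi/\gamma$ with $\xi\sim{\rm Bernoulli}(\gamma)$, so a query advances the progress by one only on the rare event $\{\xi=1\}$. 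A hitting-time argument then shows that each unit of progress costs $\Omega(1/\gamma)$ queries in expectation, so reaching progress $T$ costs $\Omega(T/\gamma)$.

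The step I expect to be the main obstacle is ruling out that an algorithm sidesteps this stochastic bottleneck by exploiting the \emph{deterministic} gradient component on the as-yet-undiscovered coordinate. This is precisely the role of the random rotation $\mU$. Conditioned on the query transcript, the undiscovered columns of $\mU$ remain nearly uniform on the orthogonal complement of the explored subspace, so with overwhelming probability the iterates have negligible projection onto them and the clean component $\nabla_i f^{\rm nc}$ at the frontier coordinate stays below the activation threshold $1/4$ until a Bernoulli reveal actually fires. Making this rigorous requires a measure-concentration estimate for random projections --- for $d$ large relative to the query budget, $N$ adaptively chosen directions overlap a hidden random $T$-dimensional subspace only by an exponentially small amount --- together with a union bound over queries and coordinates. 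With this ``no free progress'' guarantee the progress variable is driven solely by Bernoulli firings, the $\Omega(T/\gamma)$ bound holds unconditionally, and substituting the tuned values of $T$ and $\gamma$ yields ${\rm Compl}_\epsilon(L_1,\Delta,\sigma^2)\ge c'\Delta L_1\sigma^2\epsilon^{-4}$.
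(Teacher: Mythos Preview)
The paper does not prove this theorem at all: it is stated purely as a citation of \citep[Theorem 3]{arjevani2023lower} and used as a black box in the proof of Theorem~\ref{thm:lower}. There is therefore nothing in the paper to compare your proposal against.

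That said, your sketch is a faithful outline of the argument in the cited reference. You correctly identify the three ingredients (calibration of smoothness and suboptimality gap, oracle variance bound, and the progress/hitting-time argument), and you correctly flag the random rotation $\mU$ as the device that prevents an algorithm from guessing the next coordinate via the deterministic part of the gradient. If you were to flesh this out into a full proof you would essentially be reproducing Sections~4--5 of \citet{arjevani2023lower}; for the purposes of this paper, simply citing the result is the intended approach.
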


\begin{proof}[Proof of Theorem \ref{thm:lower}]
For any randomized algorithm $\texttt{A}$ defined as Eq.~(\ref{eq:dfn-alg}) running it on our hard instance, we show that it can be simulated by another randomized algorithm running on the variable $\vx$ such that Theorem~\ref{thm:lower-Arj} can be applied. Since $G(y) = \mu y$ is a deterministic mapping we know that any randomized algorithm $\texttt{A}$ induces a sequence of measurable mappings $\{\texttt{A}_t'\}_{t \in \sN}$ such that
\begin{align*} 
(\vx_t, y_t) &= \texttt{A}_t' (\xi, F(\vx_0), \cdots, F(\vx_{t-1}), y_0, \cdots, y_{t-1} ).
\end{align*}
Expanding the recursion for $y_t$ shows that the above equation induces another sequence of measurable mappings
$\{\texttt{A}_t''\}_{t \in \sN}$ such that
\begin{align*}
    (\vx_{t}, y_t) = \texttt{A}_t'' (\xi, F(\vx_0), \cdots, F(\vx_{t-1})).
\end{align*}
Therefore, we can apply Theorem \ref{thm:lower-Arj} to complete the proof.
\end{proof}





\section{The F${}^2$SA-2 Algorithm}
\label{apx:alg-F2SA-2}

We present the realization of F${}^2$SA-$p$ when $p=2$ in Algorithm \ref{alg:F2SA-2} to further compare its procedure with the original F${}^2$SA algorithm. 
Let $\lambda = 1/\nu$. We can observe that F${}^2$SA \citep{kwon2023fully,chen2023near} solves the following \textit{asymmetric} penalty problem
\begin{align*}
    \min_{\vx \in \sR^{d_x}, \vy \in \sR^{d_y}} f(\vx,\vy) + \lambda \left(g(\vx,\vy) - \min_{\vz \in \sR^{d_y}} g(\vx,\vz) \right),
\end{align*}
As an improved method, the F${}^2$SA-2 solved the following \textit{symmetric} penalty problem:
\begin{align*}
    \min_{\vx \in \sR^{d_x}, \vy \in \sR^{d_y}} \frac{1}{2} \left( f(\vx,\vy) + \lambda f(\vx,\vy) - \min_{\vz \in \sR^{d_y}} (-f(\vx,\vz) + \lambda g(\vx,\vz))) \right).
\end{align*}
The latter is better since the symmetric form makes the first-order approximation error to $\nabla \varphi(\vx)$ perfectly cancel out and leave only the second-order error term. Therefore, in terms of the theoretical guarantee by Theorem \ref{thm:upper-pth}, the $\tilde \gO(\epsilon^{-5})$ upper bound of F${}^2$SA-2 can improve the $\tilde \gO(\epsilon^{-6})$ upper bound of F${}^2$SA by a factor of $\epsilon^{-1}$.


\begin{algorithm*}[htbp]  
\caption{F${}^2$SA-2 $(\vx_0,\vy_0)$} \label{alg:F2SA-2}
\begin{algorithmic}[1] 
\STATE $ \vz_0 = \vy_0$ \\[1mm]
\STATE \textbf{for} $ t =0,1,\cdots,T-1 $ \\[1mm]
\STATE \quad $ \vy_t^0 =  \vy_{t}, ~ \vz_t^0 = \vz_{t}$ \\[1mm]
\STATE \quad \textbf{for} $ k =0,1,\cdots,K-1$ \\[1mm]
\STATE \quad \quad  Sample random i.i.d indexes $(\xi^y,\zeta^y) $ and $(\xi^z, \zeta^z)$. \\[1mm]
\STATE \quad \quad $ \vy_t^{k+1} = \vy_t^k - \eta_y \left( \nu F_y (\vx_t,\vy_t^k; \xi^y) +  G_{y}(\vx_t,\vy_t^k; \zeta^y) \right)$ \\[1mm]
\STATE \quad \quad $ \vz_t^{k+1} = \vz_t^{k}- \eta_y \left(-\nu F_y (\vx_t,\vz_t^t; \xi^z) +  G_{y}(\vx_t,\vz_t^k; \zeta^z) \right)   $ \\[1mm]
\STATE \quad \textbf{end for} \\[1mm]
\STATE \quad $\vy_{t+1} = \vy_t^{K},~ \vz_{t+1} = \vz_t^{K} $ \\[1mm]
\STATE \quad Sample random i.i.d indexes $\{ ({\xi_i^x},{\zeta_i^x}) \}_{i=1}^S. $ \\[1mm]
\STATE \quad {\small $ \Phi_t = \dfrac{1}{2} \sum_{i=1}^S \left(F_x(\vx_t,\vy_{t+1};{\xi_i^x}) + F_x(\vx_t,\vz_{t+1};\xi_i^x) +  \dfrac{G_x(\vx_t,\vy_{t+1};\zeta_i^x) - G_x(\vx_t,\vz_{t+1};\zeta_i^x) }{\nu} \right)$} \\[1mm]
\STATE \quad  $\vx_{t+1} = \vx_t -  \eta_x {\Phi_t} / {\Vert \Phi_t \Vert}$ \\[1mm]
\STATE \textbf{end for} \\[1mm]
\end{algorithmic}
\end{algorithm*}

\newpage 

{\section{Additional Experiments}} \label{apx:add-exp}

This section provides additional experiments on finding the optimal per-parameter regularization of a 5-layer MLP with ReLU activation and the hidden layer size of $500$. Following the notation in Example \ref{exmp:l2reg}, we let $\vx \in \sR^d$ parameterize the regularization matrix via $\mW_\vx = {\rm diag} (\exp(\vx))$. 
We also let $\ell_{\rm val}$ and $\ell_{\rm tr}$ be the logistic loss of the network prediction on the validation set and training set, respectively. The problem to solve has the same formulation as Example \ref{exmp:l2reg}, as restated below:
 \begin{align} \label{eq:deep-l2reg}
    \min_{\vx \in \sR^{d}} \ell_{\rm val}(\vy), \quad {\rm s.t.} \quad \vy \in \arg \min_{\vy \in \sR^{d}} \ell_{\rm tr}(\vy) + \vy^\top \mW_{\vx} \vy.
\end{align}
The difference between Example \ref{exmp:l2reg} is that now the problem is nonsmooth nonconvex due to the use of the MLP model. We present the experiment results in Figure \ref{fig:deep-l2reg}.

\begin{figure}[htbp]
    \centering
    \includegraphics[scale= 0.25]{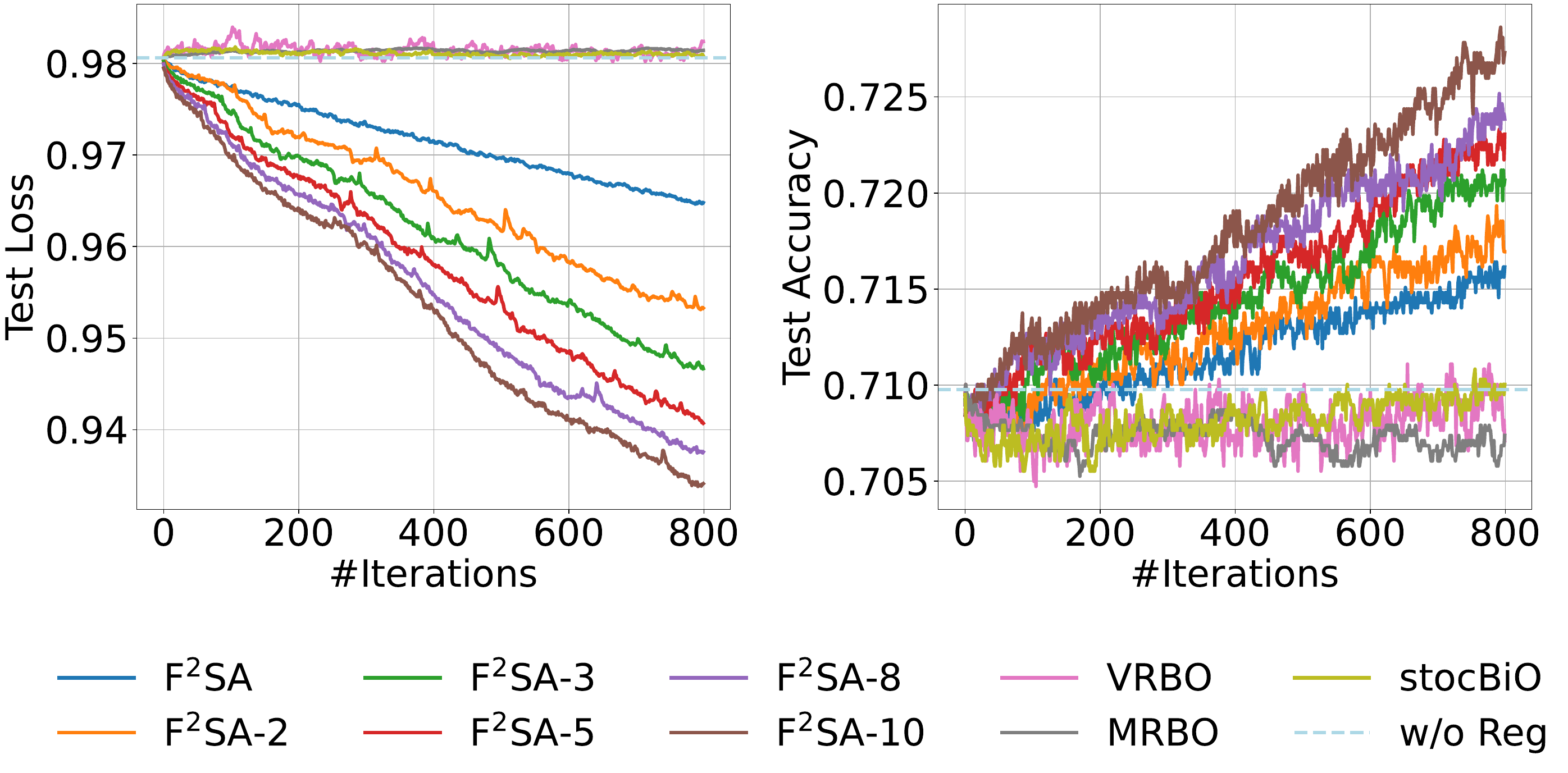}
\caption{Performances of different algorithms on Problem (\ref{eq:deep-l2reg}) with an MLP model.}
\label{fig:deep-l2reg}
\end{figure}

\section{Use of Large Language Models}

Large language models were used to help calculate the coefficient $\alpha_0$ when $p$ is odd in Lemma \ref{lem:error-pth}, and to refine wording and correct grammatical errors in parts of the paper.


\end{document}